\newtheorem{theorem}{Theorem}
\newtheorem{lemma}[theorem]{Lemma}
\newtheorem{proposition}[theorem]{Proposition}
\newtheorem{definition}[theorem]{Definition}
\date{\today}
\newcommand{\C}{\ensuremath{\mathbb{C}}}
\newcommand{\R}{\ensuremath{\mathbb{R}}}
\newcommand{\Z}{\ensuremath{\mathbb{Z}}}
\numberwithin{equation}{section}
\def\frac#1#2{{#1\over#2}}
\def\Hom{{\rm Hom}}
\def\vol{{\rm vol}}
\def\im{{\rm Im}\,}
\begin{document}

\title{The Lagrange reduction of the N-body problem, a survey}

\author{Alain Chenciner\\
  \\
  \small Observatoire de Paris, IMCCE (UMR 8028), ASD\\
  \small 77, avenue Denfert-Rochereau, 75014 Paris, France\\
  \small \texttt{chenciner@imcce.fr}}

\maketitle

\hangindent=4cm\hangafter =-9 \noindent{\it Dedicated with admiration to the memory of\\ T$\underset{^\cdot}{a}$
 Quang Bu\'u and L\^e Van Thi\^em.}\medskip

\begin{abstract}
In his fondamental "Essai sur le probl\`eme des trois corps" (Essay on the 3-body problem \cite{L1}), Lagrange, well before Jacobi's "reduction of the node", carries out the first complete reduction of symetries in this problem. Discovering the so-called homographic motions (Euler had treated only the colinear case), he shows that these motions necessarily take place in a fixed plane, a result which is simple only for the "relative equilibria". In order to understand the true nature of this reduction -- and of Lagrange's equations - it is necessary to
consider the n-body problem in an euclidean space of arbitrary dimension. The actual dimension of the  ambiant space then appears as a constraint, namely the angular momentum bivector's degeneracy. I describe in detail the results obtained in a joint paper with Alain Albouy published in french in 1998 \cite{AC}: for a non homothetic homographic motion to exist, it is necessary that the motion takes place in an even dimensional space. Two cases are possible: either the configuration is ``central" (that is a critical point of the potential among configurations with a given moment of inertia) and the space where the motion takes place is endowed with an hermitian structure, or it is ``balanced" (that is a critical point of the potential among configurations with a given inertia spectrum) and the motion is a new type, quasi-periodic, of relative equilibrium. Only the first type is of Kepler type and hence corresponds to the absolute minimum in Sundman's inequality. When the space of motion is odd dimensional, one can look for a substitute to the non-existing homographic motions: a candidate is the family of Hip-Hop solutions, which are ``simple" periodic solutions  naturally related to relative equilibria through Lyapunov families of quasi-periodic solutions (see \cite{CF}). Finally, some words are said on the bifurcation of periodic central relative equilibria to quasi-periodic balanced ones.

{\it Part of this survey is adapted from \cite{C0}, a course given in Ravello in 1997.}
\end{abstract}
\maketitle

\tableofcontents

\section{From the side of ambiant space to the side of the bodies} 
This section introduces the {\it disposition space} $\mathcal D$ and its {\it scalar product} $\mu$.
This leads to distinguishing between the ``objective" side of ambiant space, represented by an euclidean space $(E,\epsilon)$ and the ``subjective" side of the bodies, represented by the euclidean space $({\mathcal D},\mu)$. 

\subsection{The shape of N points in an euclidean space}

The equivalent formul\ae
$$\beta\bigl((\xi_1,\cdots,\xi_n),(\eta_1,\cdots,\eta_n)\bigr):=\sum_{i,j}{<\vec
r_i,\vec r_j>_E}\,\xi_i\eta_j=\sum_{i,j}{(-\frac{1}{2}r_{ij}^2)\xi_i\eta_j}$$
define a quadratic form on the hyperplane 
$${\mathcal D}^*:=\Bigl\{(\xi_1,\ldots,\xi_n)\in \R^n\; |\; \sum_{i=1}^n \xi_i=0\Bigr\}$$
of $\R^n$. The vectors $\vec r_1,\cdots,\vec r_n$ are elements of a finite dimensional
euclidean space
$(E,\epsilon)$, whose scalar product and norm are written $<,>_\epsilon$ and $\|\, \|_\epsilon$, and
the positive numbers $r_{ij}=\|\vec r_i-\vec r_j\|_\epsilon$ are the mutual distances. 

This form defines the configuration of the $n$ points up to a common rigid motion
(translation and rotation) in
$E$. The hyperplane ${\mathcal D}^*$ is to be considered as the dual of the space of
{\sl dispositions}
$${\mathcal D}=\R^n/(1,\cdots,1),$$   that is the space of $n$-tuples of points
on a line up to translation.    
The $n(n-1)/2$ numbers $r_{ij}^2$ are the coordinates of $\beta$ in a natural basis of
the space of quadratic forms on ${\mathcal D}^*$. Considered as a homomorphism from ${\mathcal D}^*$ to ${\mathcal D}$, $\beta$ takes the form:
$$\beta={}^t\! x\circ\epsilon\circ x\in Hom_s({\mathcal D}^*,{\mathcal D})\equiv Q({\mathcal D}^*)\equiv{\mathcal D}\odot{\mathcal D},$$ 
where the isomorphism $\epsilon:E\to E^*$ is given by the euclidean structure of $E$ and the mapping 
$$x:{\mathcal D}^*\to E,\; x(\xi_1,\cdots,\xi_n)=\sum_{i=1}^n{\xi_i\vec r_i},$$ defines the
configuration up to translation (notice that we use the french ``left" convention for the transposed mapping ${}^t\! x:E^*\to{\mathcal D}^{**}\equiv{\mathcal D}$). 
This is nothing but the Gram construction. In other words, $\beta=x^*\epsilon$.
One
checks that ${\rm Ker}\beta={\rm Ker}\, x$ and   
$\im\beta=\im{}^t\! x$.  
A neces\-sary
and sufficient condition that the
$r_{ij}^2$ be indeed the squares of the mutual distances of $n$ points in some euclidean space
 $E$ is that $\beta$ be positive (Menger [Me] in terms of determinant
inequalities, Schoenberg [Sch], see also Blumenthal [Bl]).  When this condition is not verified, one can still interpret the $r_{ij}^2$ (which one can even replace by not necessarily positive real numbers) as the squared mutual distances of $n$ points in some space endowed with a non degenerate but not necessarily positive quadratic form.
\medskip

\subsection{Masses as an euclidean structure on the dispositions}

Given $n$ positive numbers $m_1,\cdots,m_n$, the {\it masses}, we define an euclidean structure
on ${\mathcal D}$ by
$$\|(x_1,\dots,x_n)\|_{\mu}^2=\sum_{i=1}^nm_i(x_i-x_G)^2=\frac{1}{M}\sum_{1\le i<j\le n}m_im_j(x_i-x_j)^2,$$
where $M=m_1+\cdots+m_n$ is
the total mass and $x_G=(m_1 x_1+\cdots+m_n
x_n)/M$ is the {\sl center of mass} of the $x_i$. 
The associated isomorphism $\mu:{\mathcal D}\to{\mathcal D}^*$ is defined by
\begin{equation*}
\begin{split}
&\mu(x_1,\ldots,x_n)=\bigl(m_1(x_1-x_G),\dots,m_n(x_n-x_G)\bigr),\\
&\mu^{-1}(\xi_1,\dots,\xi_n)=\bigl(\frac{\xi_1}{m_1},\dots,\frac{\xi_n}{m_n}\bigr)\cdot
\end{split}
\end{equation*}

\goodbreak
Notice that, endowing $\R^n$ with the metric $||x||^2=\sum_{i=1}^nm_ix_i^2$, one can identify the quotient space ${\mathcal D}$ with the hyperplane $x_G=0$ ortho\-gonal to $(1,\cdots,1)$, endowed with the induced metric: {\it reducing the
translational symmetry amounts to fixing the center of mass.} All this is in germ in
Jacobi; for example, the classical {\sl Jacobi coordinates} amount to choosing a
particular orthogonal basis of ${\mathcal D}^*$.   The quadratic form on
$E^*$ associated to
$$b=x\circ\mu\circ {}^t\! x\in Hom_s(E^*,E)=Q(E^*)=E\odot E$$ is called the {\sl (dual) inertia form} of the configuration. It conveys the same information as  the {\it inertia operator}
$\mathcal{I}:\Lambda^2E^*\to\Lambda^2E$ 
which transforms the {\it instantaneous rotation} into the {\it angular momentum} (see below) and is defined by
$$\mathcal{I}(\Omega)=b\circ \Omega\circ\epsilon^{-1}+\epsilon^{-1}\circ\Omega\circ b\in(\Lambda^2E)\otimes_s(\Lambda^2E).$$
The forms $\beta$ and $b$ are closely related : $b$ being on the side of ambient space
turns with the bodies under an isometry of $E$, while $\beta$ being on the side of the
bodies is invariant under isometries of $E$. To get a more concrete understanding, we transform these forms into the endomorphisms 
$$B=\mu\circ \beta=\mu\circ{}^t\! x\circ\epsilon\circ x:{\mathcal D}^*\to{\mathcal D}^* \quad\hbox{and}\quad   S=b\circ \epsilon=x\circ\mu\circ {}^t\! x\circ\epsilon:E\to E.$$
\noindent  In an orthonormal basis of $E$, the endomorphism $S$ is represented by the symmetric {\it inertia} matrix
$$
S=\begin{pmatrix}
\sum_{k=1}^nm_kx_{1k}^2&\sum_{k=1}^nm_kx_{1k}x_{2k}&\cdots&\cdots&\sum_{k=1}^nm_kx_{1k}x_{dk}\\
\sum_{k=1}^nm_kx_{2k}x_{1k}&\sum_{k=1}^nm_kx_{2k}^2&\cdots&\cdots&\cdots\\
\cdots&\cdots&\cdots&\cdots&\cdots\\
\sum_{k=1}^nm_kx_{dk}x_{1k}&\cdots&\cdots&\cdots&\sum_{k=1}^nm_kx_{dk}^2\\
\end{pmatrix},
$$
where $x_{1k},\cdots, x_{dk}$ are the coordinates of $\vec r_k-\vec r_G$. On the other hand, extended to the unique endomorphism of $\R^n$ sending $(m_1,\cdots,m_n)$ to 0, $B$ becomes the $\mu^{-1}$-symmetric {\it intrinsic (or subjective\footnote{a denomination proposed to the author by the late Adrien Douady}) inertia} matrix
$$
B=\begin{pmatrix}
m_1||\vec r_1-\vec r_G||_\epsilon^2&\cdots&\cdots&m_1<\vec r_1-\vec r_G,\vec r_n-\vec r_G>\\
\cdots&\cdots&\cdots&\cdots\\
\cdots&m_i<\vec r_i-\vec r_G,\vec r_j-\vec r_G>&\cdots&\cdots\\
m_n<\vec r_n-r_G,\vec r_1-\vec r_G>&\cdots&\cdots&m_n||\vec r_n-\vec r_G||^2_\epsilon
\end{pmatrix}.
$$
The common trace
$I$ of the endomorphisms 
$B=\mu\circ \beta$ and $S=b\circ \epsilon$ is the {\sl moment of inertia} of the
configuration with respect to its center of mass : 
$$I=\sum_{i=1}^n {m_i\|\vec r_i-\vec r_G\|^2}=\frac{1}{M}\sum_{i<j}{m_im_j}\|\vec r_i-\vec
r_j\|^2.$$ 
More generally, let $\vol_{i_1\cdots i_k}$ be the vo\-lume of the $(k-1)$-dimensional
pa\-ral\-le\-lotope gene\-rated in $E$ by the vec\-tors
$\vec r_2-\vec r_1,\vec r_3-\vec r_1,\cdots,\vec r_k-\vec r_1$, that is $(k-1)!$ times the
vo\-lume of the simplex defined by the points $\vec r_1,\cdots,\vec r_k$. 
One checks (see \cite{C1}) that
$$\det({\mathcal I}d_{{\mathcal D}^*}-\lambda\mu\circ \beta)= \det
({\mathcal I}d_E-\lambda b\circ \epsilon)=1-\eta_1\lambda+\cdots
+(-1)^{n-1}\eta_{n-1}\lambda^{n-1},$$ where
$\eta_{k-1}=\frac{1}{M}\sum_{i_1<\cdots<i_k}{m_{i_1}\cdots m_{i_k}\vol^2
_{i_1\cdots i_k}}$.

\section{The $n$-body problem and its symmetries}
Given $n$ point masses $m_1,\cdots,m_n$ located at $\vec r_1,\cdots,\vec r_n$ in the euclidean space $(E,\epsilon)$, 
the equations of the newtonian $n$-body problem in a galilean frame are 
$$\ddot {\vec r}_i={\mathcal G}\sum_{j\not=i}m_j\frac{\vec r_j-\vec r_i}{r_{ij}^3}.$$
They possess the following symmetries:
\smallskip

\noindent 1) the galilean symmetry (invariance under a change of galilean frame, which is in general dealt with by choosing a galilean frame fixing the center of mass at the origin);

\noindent 2) the invariance under the action of the orthogonal group of $(E,\epsilon)$;

\noindent 3) the scaling symmetry, which asserts that, given a solution $x(t)=(\vec r_1(t),\cdots,\vec r_n(t))$ and any real number $\lambda>0$, $x_\lambda(t)=\lambda^{-\frac{2}{3}}x(\lambda t)$ is also a solution;

\noindent 4) the invariance under permutation of equal masses.
\smallskip

The so-called ``reduction" of the $n$-body problem is directly concerned only with the first two but the scaling symmetry is behind the existence of the homographic motions with non zero eccentricity. The last one plays an important role in the search for ``symmetric" action-minimizing (quasi-)periodic solutions of the $n$-body problem.

\subsection{The  reduction of translations: the Wintner-Conley matrix}

Since Lagrange \cite{L2}, the equations of the newtonian $n$-body problem are written
$$m_i\ddot {\vec r}_i=\frac{\partial U}{\partial\vec r_i},\; i=1,\cdots,n,\quad 
U=\sum_{i<j} {m_im_j\Phi(r_{ij}^2)},\;\; \Phi(s)={\mathcal G}s^{-\frac{1}{2}}.$$
$U$ is the {\sl force function}, opposite to the potential energy. 
``Reducing'' the translations, one can write this equation as the following equality of
homomorphisms from ${\mathcal D}$ to $E^*$ : 
$$\epsilon\circ\ddot x\circ\mu=dU(x)\, . \leqno  (N)$$
Being invariant under translation,  $U$ can be considered as a function $x\mapsto U(x)$ on $\Hom({\mathcal D}^*,E)={\mathcal
D}\otimes E$ and its derivative $dU(x)$ as an element of the dual space $\Hom({\mathcal D}^*,E)^*$,
identified with
$\Hom({\mathcal D},E^*)={\mathcal D}^*\otimes E^*$ via the bilinear
mapping
$(\phi,\psi)\mapsto{\rm trace}\,({}^t\!\phi\circ\psi)$. The isomorphism $x\mapsto
\epsilon\circ x\circ\mu$ defines the euclidean structure $\mu\otimes\epsilon$ on
$\Hom({\mathcal D}^*,E)\equiv{\mathcal D}\otimes E$, in terms of which $(N)$ becomes $$\ddot x=\nabla U(x),$$
where $\nabla$ is the gradient. We shall denote by a dot $\cdot$ the scalar product $\mu\otimes\epsilon$~:
$$x\cdot y={\rm trace}\,(\epsilon\circ x\circ\mu\circ {}^t\!y)=\sum_{i=1}^n{m_i<\vec
r_i-\vec r_G,\vec s_i-\vec s_G>_E},$$
 if $x$ and $y$ are respectively represented by
$(\vec r_1,\cdots
\vec r_n)$ and $(\vec s_1,\cdots \vec s_n)$.  For instance, the moment of inertia 
$I=x\cdot x\,$ is just the squared norm $||x||_{\mu\otimes\epsilon}^2$ of $x$.  
From the invariance of $U$ under rotations, one can write $$U(x)=\hat U(\beta).$$ 
From the chain rule one obtains 
$dU=2\epsilon\circ x\circ d\hat U$, which puts the equations of motion in the form
$$\ddot x=2x\circ A,\leqno
(N)$$ where the {\sl Wintner-Conley endomorphism} 
$$A=d\hat U\circ\mu^{-1}:{\mathcal
D}^*\to{\mathcal D}^*$$ depends linearly on the masses. Extended to the unique endomorphism of $\R^n$ sending $(m_1,\cdots,m_n)$ to 0, $A$ becomes the $n\times n$ matrix
$$A=\frac{1}{2}\begin{pmatrix}
-\sum_{l\not=1}\frac{m_l}{r_{1l}^3}&\cdots&\cdots&\frac{m_1}{r_{1i}^3}&\cdots&\frac{m_1}{r_{1j}^3}&\cdots&\frac{m_1}{r_{1n}^3}\\
\cdots&\cdots&\cdots&\cdots&\cdots&\cdots&\cdots&\cdots\\
\cdots&\cdots&\cdots&\cdots&\cdots&\cdots&\cdots&\cdots\\
\frac{m_i}{r_{i1}^3}&\cdots&\cdots&-\sum_{l\not=i}\frac{m_l}{r_{il}^3}&\cdots&\frac{m_i}{r_{ij}^3}&\cdots&\frac{m_i}{r_{in}^3}\\
\cdots&\cdots&\cdots&\cdots&\cdots&\cdots&\cdots&\cdots\\
\frac{m_j}{r_{j1}^3}&\cdots&\cdots&\frac{m_j}{r_{ji}^3}&\cdots&-\sum_{l\not=1}\frac{m_l}{r_{jl}^3}&\cdots&\frac{m_j}{r_{jn}^3}\\
\cdots&\cdots&\cdots&\cdots&\cdots&\cdots&\cdots&\cdots\\
\frac{m_n}{r_{n1}^3}&\cdots&\cdots&\frac{m_n}{r_{ni}^3}&\cdots&\frac{m_n}{r_{nj}^3}&\cdots&-\sum_{l\not=1}\frac{m_n}{r_{nl}^3}
\end{pmatrix}.$$

\subsection{The reduction of rotations: Lagrange equations}

After reduction of the translations, the phase space of the $n$-body pro\-blem can be
taken as the tangent space to $\Hom({\mathcal D}^*,E)$
and identified with $\Hom(({\mathcal D}^*)^2,E)$. The elements will be written $z=(x,y)$, $x$ for the positions $(\vec
r_1,\cdots,\vec r_n)$,
$y$ for the velocities 
$(\vec v_1,\cdots,\vec v_n)$. The {\sl space of
motion} is the image $\im z$ of $z$ and the equations of motion are
$$\dot x=y,\qquad\dot y=2x\circ A.\leqno (N)$$
By the Gram construction
we get  (the $+$ sign means symmetric
positive) 
$${\mathcal E}={}^t\! z\circ\epsilon\circ z=\begin{pmatrix} {}^t\!
x\circ\epsilon\circ x&{}^t\! x\circ\epsilon\circ y\\ {}^t\! y\circ\epsilon\circ
x&{}^t\! y\circ\epsilon\circ y\end{pmatrix}=\begin{pmatrix}\beta&\gamma-\rho\\ \gamma+\rho&\delta
\end{pmatrix}\in \Hom_+(({\mathcal D}^*)^2,{\mathcal D}^2).$$
So, after reduction of both translations and rotations, the state of the system is
described by four elements
$\beta,\gamma,\delta,\rho$ of $\Hom({\mathcal D}^*,{\mathcal D})$, whose first three are
symmetric ($\beta$ and $\delta$ are moreover positive) and the last antisymmetric. 
Computing $\dot{\mathcal E}$ with the help of $(N)$ we get reduced equations
which, under a very compact form, coincide with the systems obtained by Lagrange \cite{L1} and Betti \cite{Be} and whose solutions 
are the {\it relative motions}:
\begin{equation*}
\begin{split}
\label{NRel}
\dot\beta&=2\gamma,\\
\dot\gamma&={}^t\! A\circ\beta+\beta\circ A+\delta,\\
\dot\delta&=2({}^t\! A\circ\gamma+\gamma\circ A)-2({}^t\!
A\circ\rho-\rho\circ A),\\
\dot\rho&={}^t\!
A\circ\beta-\beta\circ A.
\end{split}
\end{equation*}

Transforming bilinear forms into endomorphisms of ${\mathcal D}^*$, we set
$$B=\mu\circ\beta,\; C=\mu\circ\gamma,\; D=\mu\circ\delta,\; R=\mu\circ\rho.$$
The relative equations become a system of matrix equations
\begin{equation*}
\begin{split}
\dot B&=2C,\\
\dot C&=AB+BA+D,\\
\dot D&=2(AC+CA)-2[A,R],\\
\dot R&=[A,B].
\end{split}
\end{equation*}

\subsection{Invariants and first integrals: 1) the energy and the Lagrange-Jacobi relation}
\medskip
The traces $I$, $J$, $K$ of the endomorphisms
$B$, $C$, $D$ can be
written
$$I=x\cdot x,\quad J=x\cdot y,\quad K=y\cdot y,$$
where we recall that the dot means the $\mu\otimes\epsilon$ scalar product on ${\mathcal D}\otimes E$.

\noindent At the level of traces, what is left of the equations of motion is 

$$\dot J=
{\ddot I\over 2}=K-U,\quad
\dot H=0,$$
where $H={1\over 2}K-U$ is the total energy, sum of the kinetic energy
in a galilean frame fixing the center of mass and of the potential
energy. The first is the {\sl Lagrange-Jacobi relation} (or {\sl virial} relation, see
Jacobi \cite{J}, Poincar\'e \cite{P} p. 90,91) and the second is the {\sl conservation of energy}.
As the Lagrange-Jacobi relation can also be written $\ddot I=
4H+2U$, we see that, {\it as
$I$ controls the size of the system, that is $\sup_{i,j}r_{ij}$, its second derivative
$\ddot I$ or equivalently the potential function $U$, controls the clustering 
$\inf_{i,j}r_{ij}$}.  
\smallskip

\goodbreak

\noindent{\bf Remark on the Jacobi-Banachiewitz case.} 
If in the expression of the Newton potential we replace $\Phi(s)={\mathcal G}s^{-\frac{1}{2}}$ by $\Phi(s)={\mathcal G}s^{\kappa}$,
the Lagrange-Jacobi relation becomes $\ddot I=2K+4\kappa U=4H+4(\kappa+1)U$,
which shows the exceptional behaviour of the {\it Jacobi-Banachiewicz potential} ($\kappa=-1$) \cite{Ba,W2} for which $\ddot
I=2H$ is constant, which implies the additional first integral $G=2IH-J^2$. This integral is associated with the scaling symmetry 
which, in this particular case only, is symplectic. 
\smallskip

The Lagrange-Jacobi
relation is basic to our understanding of the global behaviour of the solutions of the
$n$-body problem: if we write it $\dot J=2H+2(\kappa+1)U$, we see that, in the
newtonian case or more generally if $\kappa> -1$, the posi\-tivity of $U$
implies that $J$ is increasing along any solution whose total energy $H$ is
$\geq0$. The existence of such a {\sl Lyapunov function} precludes any non trivial
recurrence, in particular it forbids any periodic motion.   
It is well known that things are much more complicated in negative energy. The
basic tool replacing $J$ is, in the newtonian case, {\sl Sundman's function} (see \cite{C0})
$$S=I^{-\frac{1}{2}}(J^2+|{\mathcal C}|^2)-2I^{\frac{1}{2}}H,$$ 
where $\mathcal C$ is the {\it angular momentum bivector} which we study in the next section.

\subsection{Invariants and first integrals: 2) the angular momentum and its associated hermitian structure}\label{hermitian}

In a galilean frame whose origin is at the center of mass ($\vec r_G=\sum_{k=1}^nm_k\vec r_k=\vec v_G=\sum_{k=1}^nm_k\vec v_k=0$), the angular momentum of $z=(x,y)\in{\mathcal D}^2\otimes E$ is 
$${\mathcal C}=\sum_{k=1}^n{m_i\vec r_k\wedge \vec v_k}\in\Lambda^2E\equiv Hom_a(E^*,E).$$
Considered as an antisymmetric form on $E^*$, that is an antisymmetric
homomorphism from $E^*$ to $E\equiv E^{**}$, it can be written
$${\mathcal C}=z\circ\omega_{\mu}\circ {}^t\! z=-x\circ\mu\circ {}^t\! y+
y\circ\mu\circ {}^t\! x,$$
where $\omega_\mu:{\mathcal D}^2\to({\mathcal D}^*)^2$ is defined by 
$\omega_\mu(u,v)=\bigl(-\mu(v),\mu(u)\bigr)$ is the natural symplectic structure on ${\mathcal D}^2$. 
In other words, 
$${\mathcal C}=({}^t\! z)^*\omega_\mu.$$
If $x_{1k},\cdots, x_{dk}$ (resp. $y_{1k},\cdots, y_{dk}$) are the coordinates of $\vec r_k$ (resp. $\vec v_k$) in some orthonormal basis of $E$,  ${\mathcal C}$ can be identified with the antisymmetric matrix
with coefficients $$c_{ij}=\sum_{k=1}^Nm_k(-x_{ik}y_{jk}+x_{jk}y_{ik}).$$
One  readily  computes its derivative $\dot{\mathcal C}=2x\circ (-\mu\circ {}^t\!A+A\circ
\mu)\circ {}^t\! x=0$, which shows that its invariance follows from the fact that $A$ is $\mu^{-1}$-symmetric. 
\smallskip

\noindent The {\sl support} of
the bivector
${\mathcal C}$, that is the image of
${\mathcal C}\in \Hom(E^*,E)$, is called the {\sl fixed space}. Its dimension, always
even, is the rank of
${\mathcal C}$. One finds in \cite{A1} the proof by elementary symplectic geometry of the
estimates $${\rm rank\,}{\mathcal C}\leq {\rm rank\,}{\mathcal E}\leq {1\over 2}{\rm
rank\,}{\mathcal C}+n-1,$$ which generalize a theorem of Dziobek saying that for three
bodies with zero angular momentum, the motion necessarily takes place in a fixed plane.
\medskip

\goodbreak

\noindent {\bf Hermitian structures.}
When $\dim E=2$ and $E$ is oriented, ${\mathcal C}$
can be thought of as a real number $c$. If $c\not=0$, the
rotation by
$\pm{\pi\over 2}$ according to whether $c$ is positive or negative defines on $E$ a hermitian structure (i.e. an identification of $E$ with the complex plane in which the multiplication by $i$ is an isometry). When
$\dim E=3$, ${\mathcal C}$ can be thought of as a vector
$\vec{\mathcal C}$ orthogonal to the fixed plane (if it is different from 0) on which it defines
a hermitian structure whose multiplication by $i$ is given by the ``vectorial product"
$\vec r\mapsto \frac{\vec{\mathcal C}}{|\vec{\mathcal
C}|}\wedge\vec r$.

The same is true in higher dimensions. We first recall the definition of a hermitian structure:
\begin{definition}
A hermitian structure on the vector space $F$ is a triple
$$\bigl(\epsilon:F\to F^*,\; {\mathcal J}:F\to F, \; \Omega=\epsilon\circ{\mathcal J}:F\to F^*\bigr)$$  
consisting in compatible euclidean, complex and symplectic structures.
\end{definition}

\begin{definition}The hermitian structure $(\epsilon, {\mathcal J}_{\mathcal C},  \Omega_{\mathcal C})$  defined on $F=Im{\mathcal C}$ 
 by the bivector ${\mathcal C}$ is defined by
$$
{\mathcal J}_{\mathcal C}=\left(\sqrt{-({\mathcal C}\circ\epsilon)^2}\right)^{-1}\circ{\mathcal C}\circ\epsilon,\;
\Omega_{\mathcal C}=\epsilon\circ{\mathcal J}_{\mathcal C}.$$
\end{definition}
Of course, in the definition of ${\mathcal J}_{\mathcal C}$, we invert $\sqrt{-({\mathcal C}\circ\epsilon)^2}$ only on the fixed space but the formulas still make sense if we replace $F$ by $E$. They define then a {\it degenerate (non invertible) hermitian structure}, where the action of ${\mathcal J}_{\mathcal C}$ consists in the orthogonal projection on $F$ followed by its action on $F$. 

\noindent A similar structure $(\sigma,\Omega,{\mathcal J})$ is obtained on ${\mathcal D}\otimes E=\Hom({\mathcal D}^*,E)$ by setting $\sigma(x)=\epsilon\circ x\circ\mu$, ${\mathcal J}(x)={\mathcal
J}_{\mathcal C}\circ x$ and $\Omega=\sigma\circ{\mathcal J}$. in particular, 
$${\mathcal J}(\vec r_1,\cdots,\vec r_n)=({\mathcal J}_{\mathcal C}\vec r_1,\cdots,{\mathcal J}_{\mathcal C}\vec r_n),$$ and if $E$ is 3-dimensional, ${\mathcal J}(\vec r_1,\cdots,\vec r_n)
=(\frac{\vec{\mathcal C}}{|\vec{\mathcal C}|}\wedge\vec r_1,\cdots,\frac{\vec{\mathcal C}}{|\vec{\mathcal C}|}\wedge\vec r_n)$.
This defines a (non-degenerate) hermitian
structure on the subspace formed by the elements $x=(\vec {r_1},\cdots,\vec {r_n})\in \Hom({\mathcal
D}^*,E)$ such that each $\vec {r_i}$ belongs to the fixed space. 
In the sequel, whatever be $x\in{\mathcal D}\otimes E$, the set of all elements of the form
$\lambda_1 x+\lambda_2 {\mathcal J}(x)$, $\,\lambda_1,\lambda_2\in\R$, 
will be called the {\sl complex line generated by $x$}.    
\medskip

\noindent As made clear by the very definition of ${\mathcal C}$, the following hermitian structure plays an important role:
\begin{definition}
The natural hermitian structure $(\kappa_\mu,j,\omega_\mu)$
 of ${\mathcal D}^2$ is defined by
 $$\kappa_\mu(u,v)=(\mu(u),\mu(v)),\, j(u,v)=(-v,u),\, \omega_\mu(u,v)=(-\mu(v),\mu(u)).$$
\end{definition}

\medskip

\noindent{\bf The Poisson structure} The space $\Hom_+\left(({\mathcal D}^*)^2,{\mathcal D}^2\right)$ of relative states is endowed with a Poisson structure whose symplectic leaves are the intersections of the submanifolds obtained
by fixing the rank of ${\mathcal E}$ and of those obtained by fixing the rotation
invariants of the angular momentum.  One fixes these invariants by fixing the traces
of the iterates (of even order, those of odd order are equal to zero) of
$\omega_\mu\circ{\mathcal E}$, which are equal to those of the iterates of  ${\mathcal
C}\circ\epsilon$.

\section{Sundman's inequality}

\subsection{The ``components" of the angular momentum}
Recall that in $\R^3$ endowed with its canonical basis, the vector product $\vec r \mapsto \vec V\wedge \vec r$ by the vector $\vec V=(a,b,c)$ is represented by the antisymmetric matrix
$$V=\begin{pmatrix}
0&-c&b\\
c&0&-a\\
-b&a&0
\end{pmatrix}.
$$
The scalar product $\vec V\cdot\vec W=(a,b,c)\cdot(p,q,r)=ap+bq+cr$ can then be written
$$\vec V\cdot\vec W=\frac{1}{2}trace(V{}^t \! W).$$
Recalling that the natural coupling beween $Hom(E,F)$ and $Hom(E^*,F^*)$ is given by $<f,\varphi>=trace(f\circ{}^t\!\varphi)$  
this motivates the following
\begin{definition}
Let ${\mathcal C}:E^*\to E$ and $\Omega:E\to E^*$ be antisymmetric (think of ${\mathcal C}$ as an angular momentum and $\Omega$ as an instantaneous rotation). We call 
$\frac{1}{2}\langle {\mathcal C},\Omega\rangle=\frac{1}{2}trace({\mathcal C}\circ{}^t\!\Omega)$ the ``component" of ${\mathcal C}$ ``along" $\Omega$.
\end{definition} 
Taking $\Omega=\Omega_{\mathcal C}$, we get 
\begin{definition} The {\it norm} $|{\mathcal C}|$ of the bivector $\mathcal C$ is defined by the formula 
$$
|{\mathcal C}|=\frac{1}{2}\langle {\mathcal C},\Omega_{\mathcal
C}\rangle=\frac{1}{2}trace\sqrt{-({\mathcal C}\circ\epsilon)^2}.$$
\end{definition}
When $n=2$ or $n=3$, $|{\mathcal C}|=||\vec{\mathcal C}||$ is the norm of the angular momentum vector (notice that $\Omega_{\mathcal C}$ being normalized, corresponds to a vector of length 1). 
\smallskip

\noindent In the general case, if $dim F=2d$, we have
$$|{\mathcal C}|=|\omega_1|+|\omega_2|+\cdots+|\omega_d|,$$
where the eigenvalues of $({\mathcal C}\circ\epsilon)|_F$ are $\pm i\omega_i,\; i=1,\cdots, d$.

\subsection{Complex Schwarz inequalities, Sundman's inequality}

Let $(\epsilon, {\mathcal J},\Omega=\epsilon\circ {\mathcal J})$ be a possibly degenerate Hermitian structure on the euclidean space $(E,\epsilon)$. This means that for any $\vec r\in E$ we have the inequaliity $||{\mathcal J}\vec r||_\epsilon\le ||\vec r||_\epsilon$ instead of having the equality. 
\smallskip

\noindent{\it As there is no risk of ambiguity, we denote by the same letter ${\mathcal J}$ the complex structure on ${\mathcal D}\otimes E$ defined by the composition $x\mapsto {\mathcal J}\circ x$ (also denoted by ${\mathcal J}x$).}
\smallskip

\noindent Recalling the notations $I=x\cdot x,\quad J=x\cdot y,\quad K=y\cdot y,$ we have
\begin{proposition} For any $z=(x,y)\in{\mathcal D}^2\otimes E$ and any $\Omega\in Hom(E,E^*)$ defining with $\epsilon$ a degenerate Hermitian structure, one has
$$IK-J^2-\frac{1}{4}\langle {\mathcal C},\Omega\rangle^2\ge0.$$
Equality occurs if and only if $x$ belongs to the image of ${\mathcal J}$ and $y$ belongs to the complex line generated by $x$.
\end{proposition}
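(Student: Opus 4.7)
The inequality is essentially a complex Cauchy--Schwarz for the (possibly degenerate) Hermitian structure $({\mathcal J},\Omega)$, pulled back from $E$ to ${\mathcal D}\otimes E$. The first step is to rewrite the ``mixed'' term $\frac{1}{2}\langle{\mathcal C},\Omega\rangle$ purely in terms of the $\mu\otimes\epsilon$ scalar product, so that the inequality becomes a real statement about three vectors in ${\mathcal D}\otimes E$.

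Concretely, I would substitute ${\mathcal C}=y\circ\mu\circ {}^t\!x-x\circ\mu\circ {}^t\!y$ and $\Omega=\epsilon\circ{\mathcal J}$ into $\langle{\mathcal C},\Omega\rangle=\mathrm{trace}({\mathcal C}\circ {}^t\!\Omega)$. Using ${}^t\!\Omega=-\Omega$ (antisymmetry), cyclicity of the trace, and the identity $\mathrm{trace}(\epsilon\circ u\circ\mu\circ {}^t\!v)=u\cdot v$ from Section~2.1, each term simplifies to $(\mathcal{J}x)\cdot y$ or $x\cdot(\mathcal{J}y)$. Since $\mathcal{J}$ is $\epsilon$-antisymmetric on $E$ (that is exactly the content of $\Omega$ being antisymmetric), the induced operator on ${\mathcal D}\otimes E$ is $(\mu\otimes\epsilon)$-antisymmetric, giving $(\mathcal{J}x)\cdot y=-x\cdot(\mathcal{J}y)$. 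Collecting signs, I expect
$$\tfrac{1}{2}\langle{\mathcal C},\Omega\rangle=(\mathcal{J}x)\cdot y.$$
The inequality to prove then takes the transparent form
$$\|x\|^{2}\|y\|^{2}-(x\cdot y)^{2}-\bigl((\mathcal{J}x)\cdot y\bigr)^{2}\ge 0,$$
with $I=\|x\|^{2}$, $K=\|y\|^{2}$, $J=x\cdot y$.

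Now observe two facts about $\mathcal{J}$ acting on ${\mathcal D}\otimes E$: first, antisymmetry yields $x\cdot\mathcal{J}x=0$, so $x$ and $\mathcal{J}x$ are $\mu\otimes\epsilon$-orthogonal; second, the ``degenerate'' hypothesis $\|\mathcal{J}\vec r\|_{\epsilon}\le\|\vec r\|_{\epsilon}$ on $E$ gives, coordinatewise, $\|\mathcal{J}x\|\le\|x\|$, with equality precisely when each vector entry of $x$ lies in the image $F$ of $\mathcal{J}_{\mathcal{C}}$, that is, when $x\in\mathrm{image}({\mathcal J})$. Project $y$ orthogonally onto the two-dimensional subspace spanned by $x$ and $\mathcal{J}x$ (assuming $\mathcal{J}x\ne 0$; if $\mathcal{J}x=0$ the inequality reduces to the usual real Cauchy--Schwarz). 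Bessel's inequality gives
$$\|y\|^{2}\ge\frac{(x\cdot y)^{2}}{\|x\|^{2}}+\frac{((\mathcal{J}x)\cdot y)^{2}}{\|\mathcal{J}x\|^{2}}.$$
Multiplying by $\|x\|^{2}$ and using $\|x\|^{2}/\|\mathcal{J}x\|^{2}\ge 1$ yields the claimed inequality.

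For the equality case, both estimates have to be equalities. Bessel saturates exactly when $y$ lies in the real plane $\mathrm{span}_{\R}(x,\mathcal{J}x)$, i.e.\ in the complex line generated by $x$ in the sense of Section~2.4; the inequality $\|x\|/\|\mathcal{J}x\|\ge 1$ saturates exactly when $x\in\mathrm{image}({\mathcal J})$. Both conditions together are precisely the stated equality criterion. The only real obstacle I anticipate is bookkeeping in step one: keeping track of the transposes, of the two distinct incarnations of $\mathcal{J}$ (on $E$ and on ${\mathcal D}\otimes E$), and of the sign conventions for $\omega_{\mu}$ and $\Omega$ so that the factor of $\tfrac{1}{4}$ in front of $\langle{\mathcal C},\Omega\rangle^{2}$ comes out correctly; once $\tfrac{1}{2}\langle{\mathcal C},\Omega\rangle=(\mathcal{J}x)\cdot y$ is established, the geometry is entirely elementary.
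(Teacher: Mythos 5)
Your proposal is correct and follows essentially the same route as the paper: the identity $\tfrac{1}{2}\langle{\mathcal C},\Omega\rangle=({\mathcal J}x)\cdot y$ obtained by the trace computation, followed by the complex Schwarz inequality proved by projecting $y$ onto the complex line generated by $x$ and using $\|{\mathcal J}x\|\le\|x\|$ with equality exactly when $x\in\mathrm{Im}\,{\mathcal J}$. Your version merely makes the Bessel step explicit (and has one harmless slip, writing ${\mathcal J}_{\mathcal C}$ where the general ${\mathcal J}$ is meant), so nothing of substance differs.
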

\begin{proof} One computes
$$\langle{\mathcal C},\Omega\rangle=trace(-y\circ\mu\circ{}^t\! x\circ\epsilon\circ{\mathcal J})+
trace(x\circ\mu\circ{}^t\! y\circ\epsilon\circ{\mathcal J})=2{\mathcal J}x\cdot y.$$
One concludes by the {\it complex Schwarz inequaliity}
$$||x||^2||y||^2-(x\cdot y)^2-({\mathcal J}x\cdot y)^2\ge0,$$
which one proves by projecting $y$ on the {\it complex line generated by $x$} and using the inequality $||{\mathcal J}x||\le||x||$, which is an equality only if $x\in Im{\mathcal J}$) . It follows that  equality occurs if and only if $||{\mathcal J}x||=||x||$ and $y$ belongs to the complex line generated by $x$.
\end{proof}
Choosing $\Omega=\Omega_{\mathcal C}$ gives the
\begin{proposition}[Sundman's inequality] For any $z=(x,y)\in{\mathcal D}^2$, one has
$$IK-J^2-|{\mathcal C}|^2\ge0.$$
Equality occurs if and only if $x$ belongs to the fixed space $F$ and $y$ belongs to the complex line generated by $x$.
\end{proposition}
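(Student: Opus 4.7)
The proposition is a direct specialization of the preceding one, so the plan is to substitute $\Omega = \Omega_{\mathcal C}$ and read off the two claims.

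First I would verify that $\Omega_{\mathcal C}$ is a legitimate choice for $\Omega$ in the hypothesis of the previous proposition, i.e.\ that $(\epsilon,{\mathcal J}_{\mathcal C},\Omega_{\mathcal C})$ is a (possibly degenerate) Hermitian structure on $E$. This is exactly what was recorded after the definition of ${\mathcal J}_{\mathcal C}$: the formulas $\mathcal{J}_{\mathcal{C}} = \bigl(\sqrt{-({\mathcal C}\circ\epsilon)^2}\bigr)^{-1}\circ{\mathcal C}\circ\epsilon$ and $\Omega_{\mathcal C}=\epsilon\circ{\mathcal J}_{\mathcal C}$ give a non-degenerate Hermitian structure on the fixed space $F=\im{\mathcal C}$, and when extended by the orthogonal projection onto $F$ yield a degenerate Hermitian structure on all of $E$. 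In particular $\|{\mathcal J}_{\mathcal C}\vec r\|_\epsilon\le\|\vec r\|_\epsilon$ with equality iff $\vec r\in F$, which is what the previous proposition requires.

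Next I would plug $\Omega=\Omega_{\mathcal C}$ into the estimate
$$IK-J^2-\tfrac{1}{4}\langle{\mathcal C},\Omega\rangle^2\ge 0$$
and use the definition $|{\mathcal C}|=\tfrac{1}{2}\langle{\mathcal C},\Omega_{\mathcal C}\rangle$ to rewrite the last term as $|{\mathcal C}|^2$, producing Sundman's inequality. For the equality case, the previous proposition says equality holds iff $x\in\im{\mathcal J}$ and $y$ lies in the complex line generated by $x$. Under our substitution ${\mathcal J}$ becomes ${\mathcal J}_{\mathcal C}$ acting coordinatewise on ${\mathcal D}\otimes E$, and its image is precisely the set of $x=(\vec r_1,\dots,\vec r_n)$ with each $\vec r_i\in F$; this is the meaning of ``$x$ belongs to the fixed space''. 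The ``complex line generated by $x$'' is unchanged since the complex structure on ${\mathcal D}\otimes E$ is still ${\mathcal J}_{\mathcal C}\circ(\cdot)$.

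There is no real obstacle here once the previous proposition is in hand; the only thing to be careful about is bookkeeping between the degenerate Hermitian structure on $E$ and the induced structure on ${\mathcal D}\otimes E$, so as to identify $\im{\mathcal J}$ concretely with the condition that every $\vec r_i$ lie in $F$. The factor $\tfrac{1}{4}$ versus $|{\mathcal C}|^2$ should also be double-checked against the normalization $|{\mathcal C}|=\tfrac{1}{2}\langle{\mathcal C},\Omega_{\mathcal C}\rangle$ to make sure no stray factor of $2$ appears.
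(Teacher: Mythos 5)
Your proposal is correct and follows exactly the paper's route: the paper obtains Sundman's inequality simply by choosing $\Omega=\Omega_{\mathcal C}$ in the preceding complex Schwarz proposition, with $|{\mathcal C}|=\frac{1}{2}\langle{\mathcal C},\Omega_{\mathcal C}\rangle$ turning $\frac{1}{4}\langle{\mathcal C},\Omega\rangle^2$ into $|{\mathcal C}|^2$ and the condition $x\in\mathrm{Im}\,{\mathcal J}$ into ``$x$ belongs to the fixed space $F$''. Your extra care about the degenerate Hermitian structure and the factor of $2$ is sound bookkeeping but adds nothing beyond what the paper does.
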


\subsection{Sundman's inequality as the best complex Schwarz inequality}

\begin{proposition}\label{bestSund} 
If, for some $z=(x,y)\in{\mathcal D}^2\otimes E$ and some $\Omega\in Hom(E,E^*)$ defining with $\epsilon$ a degenerate Hermitian structure, equality occurs in the complex Schwarz inequality, the motion with initial condition $z$ takes place within the fixed space $F$, on which the possibly degenerate hermitian structure ${\mathcal J}=\epsilon^{-1}\circ\Omega$ coincides with ${\mathcal J}_{\mathcal C}$. 
\end{proposition}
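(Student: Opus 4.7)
The plan is to extract the equality conditions from the preceding complex Schwarz proposition, write $\mathcal{C}$ explicitly in terms of the inertia operator and $\mathcal{J}$, and then identify both the fixed space $F$ and the matching of hermitian structures on it.

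By the equality clause just proved, we have $x\in\im\mathcal{J}$ (so each $\vec r_i$ lies in $F_\Omega:=\im\mathcal{J}\subseteq E$) and $y=\lambda_1 x+\lambda_2\mathcal{J}x$ for some reals $\lambda_1,\lambda_2$; we may assume $\lambda_2\ne0$, otherwise $\mathcal{C}=0$ and the statement is vacuous. Substituting this into $\mathcal{C}=y\circ\mu\circ{}^t\!x-x\circ\mu\circ{}^t\!y$ and using the antisymmetry of $\Omega$ in the form ${}^t\mathcal{J}=-\epsilon\circ\mathcal{J}\circ\epsilon^{-1}$, the $\lambda_1$-terms cancel, and setting $S=b\circ\epsilon$ one finds
$$\mathcal{C}\circ\epsilon=\lambda_2(\mathcal{J}\circ S+S\circ\mathcal{J})=\lambda_2\,\mathcal{I}(\Omega).$$

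The heart of the proof is showing that the motion stays inside $F=\im\mathcal{C}$, not merely in the a priori larger $F_\Omega$. On $F_\Omega$ (where $\mathcal{J}$ restricts to a genuine complex structure), decompose $S=S_++S_-$ into its $\mathcal{J}$-linear and $\mathcal{J}$-antilinear parts, where $S_+=\tfrac12(S-\mathcal{J}S\mathcal{J})$; a short computation yields $\mathcal{J}S+S\mathcal{J}=2\mathcal{J}S_+$, so that $F=\im\mathcal{C}=\im S_+$ (the last equality because $\im S_+$ is $\mathcal{J}$-invariant, $S_+$ commuting with $\mathcal{J}$). Positive semi-definiteness of $S$ is essential: if $S_+u=0$ then
$$0=\langle S_+u,u\rangle_\epsilon=\tfrac{1}{2}\bigl(\langle Su,u\rangle_\epsilon+\langle S\mathcal{J}u,\mathcal{J}u\rangle_\epsilon\bigr)$$
forces both $Su=0$ and $S\mathcal{J}u=0$, so $\ker S_+\subseteq\ker S\cap\mathcal{J}^{-1}(\ker S)$; dually, $\im x+\mathcal{J}\,\im x=\im S+\mathcal{J}\,\im S\subseteq\im S_+=F$. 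Since $\im x(0)$ and $\im y(0)$ both lie in $\im x+\mathcal{J}\,\im x$, and Newton's equations give accelerations that are linear combinations of positions (so $\im\ddot x\subseteq\im x$), the subspace spanned by positions and velocities stays in $F$ for all time.

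Finally, from $\mathcal{C}\circ\epsilon|_F=2\lambda_2\,\mathcal{J}\circ S_+|_F$ and $[\mathcal{J},S_+]=0$ one gets $(\mathcal{C}\circ\epsilon)^2|_F=-4\lambda_2^2\,S_+^2|_F$. On $F=\im S_+$ the self-adjoint operator $S_+$ is positive definite (invertible on its own image, with positivity inherited from that of $S$), so $\sqrt{-(\mathcal{C}\circ\epsilon)^2}|_F=2|\lambda_2|\,S_+|_F$, and hence $\mathcal{J}_\mathcal{C}|_F=\mathrm{sign}(\lambda_2)\,\mathcal{J}|_F$; replacing $\Omega$ by $-\Omega$ if necessary (the inequality is unchanged) gives $\mathcal{J}=\mathcal{J}_\mathcal{C}$ on $F$. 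The main obstacle is the containment $\im x+\mathcal{J}\,\im x\subseteq F$ in the previous paragraph: it hinges on the inequality $\ker S_+\subseteq\ker S$, a step which fails without positive semi-definiteness of the inertia operator $S$.
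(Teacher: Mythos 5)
Your argument is correct, and it reaches the conclusion by a genuinely different route from the paper's. The paper begins by declaring $E=\im z$ to be the space of motion, rescales to $z_0=(\sqrt{\nu}\,x,\sqrt{\nu}\,{\mathcal J}x)$, notes that $z_0$ is then complex-linear (${\mathcal J}\circ z_0=z_0\circ j^*$), and computes ${\mathcal C}\circ{}^t\!{\mathcal J}=({}^t\! z_0)^*\kappa_\mu$: positive definiteness of this Gram form makes ${\mathcal C}$ invertible on $\im z$ in one stroke, so $F=\im z$ with no need to follow the flow, and the identity $({\mathcal C}\circ{}^t\!{\mathcal J}\circ\epsilon)^2=-({\mathcal C}\circ\epsilon)^2$ then yields $\Omega=\Omega_{\mathcal C}$. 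You instead work in the ambient $E$, write ${\mathcal C}\circ\epsilon=2\lambda_2{\mathcal J}\circ S_+$ via the ${\mathcal J}$-linear part of the inertia operator, and use positive semi-definiteness of $S$ to identify $F=\im S_+=\im x+{\mathcal J}(\im x)$, after which $\im\ddot x\subseteq\im x$ keeps the motion inside $F$. Both proofs rest on the same essential input --- the positivity of the mass form transported by a complex-linear configuration, which is exactly what makes ${\mathcal C}\circ{}^t\!{\mathcal J}=2\lambda_2 S_+\circ\epsilon^{-1}$ symmetric positive --- but your $S_+$ decomposition makes explicit where ${\mathcal J}^2=-\mathrm{Id}$ actually holds and where positivity is invoked, at the cost of more bookkeeping; the paper's reduction to $\im z$ is shorter because it absorbs the flow-invariance step entirely. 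Your explicit replacement of $\Omega$ by $-\Omega$ when $\lambda_2<0$ is a point the paper handles only implicitly by writing $\sqrt{\nu}$, and is welcome. Two minor blemishes: the displayed identity ${\mathcal C}\circ\epsilon=\lambda_2\,{\mathcal I}(\Omega)$ is a type mismatch (it is ${\mathcal C}$ itself, a map $E^*\to E$, that equals $\lambda_2\,{\mathcal I}(\Omega)$; your first equality ${\mathcal C}\circ\epsilon=\lambda_2({\mathcal J}\circ S+S\circ{\mathcal J})$ is the one you actually use and is correct); and the excluded case $\lambda_2=0$ is not so much vacuous as degenerate (there ${\mathcal C}=0$, $F=\{0\}$, and the statement must be read as empty), a boundary case the paper likewise passes over.
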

\begin{proof}The hypothesis implies that there exist $\lambda,\nu\in\R$ such that $y=\lambda x+\nu{\mathcal J}x$ and that the $\mu\otimes\epsilon$ norm of ${\mathcal J}x$ is the same as that of $x$; it follows  that ${\mathcal J}$ is a non-degenerate hermitian structure on the image of $z$. 

\noindent {\it From now on, we suppose that $E=Im z$ is the space of motion.}

\noindent Without changing ${\mathcal C}=\nu(-b\circ{}^t\!\!{\mathcal J}+{\mathcal J}\circ b)$ (where $b=x\circ\mu\circ{}^t\! x$ is the inertia form), we may replace $z=(x,y)$ by
$z_0=(x_0,y_0):=(\sqrt{\nu}x,\sqrt{\nu}{\mathcal J}x)$. The virtue of $z_0$ is to be a complex mapping from $({\mathcal D}^*)^2$ to $E$, where the complex structure on $({\mathcal D}^*)^2$ is $j^*={}^t\! j^{-1}:(\xi-\eta)\mapsto(-\eta,\xi)$ :
$${\mathcal J}\circ z_0=z_0\circ j^*.$$
Hence
$${\mathcal C}=z_0\circ\omega_\mu\circ{}^t\! z_0=z_0\circ\kappa_\mu\circ j\circ{}^t\! z_0=-z_0\circ\kappa_\mu\circ{}^t\! z_0\circ {}^t\!{\mathcal J}=-({}^t\! z_0)^*\kappa_\mu{\mathcal J}.$$
As, by our assumption on $E$, ${}^t\! z_0$ is injective, this implies that 
$${\mathcal C}\circ{}^t\!{\mathcal J}=({}^t\! z_0)^*\kappa_\mu$$
 is symmetric and positive definite. In particular, ${\mathcal C}\circ{\mathcal J}$, hence ${\mathcal C}$, is an isomorphism and the space of motion $E$ coincides with the fixed space $F$.
Moreover
$$({\mathcal C}\circ{}^t\!{\mathcal J}\circ\epsilon)^2=({\mathcal C}\circ{}^t\!{\mathcal J}\circ\epsilon)\circ(-{\mathcal J}\circ{\mathcal C}\circ\epsilon)=-({\mathcal C}\circ\epsilon)^2.$$
and hence
$$-{\mathcal C}\circ\Omega={\mathcal C}\circ{}^t\!\Omega={\mathcal C}\circ{}^t\!{\mathcal J}\circ\epsilon=\sqrt{-({\mathcal C}\circ\epsilon)^2}=-{\mathcal C}\circ\Omega_{\mathcal C},$$
which implies that $\Omega=\Omega_{\mathcal C}$.
\end{proof}

\subsection{Sundman's inequality and the decomposition of velocities}\label{decvel}

\noindent We give now an interpretation of Sundman's inequality which sheds light on the cases of equality:
a configuration $x\in{\mathcal D}\otimes E$ being fixed, the space of velocities
decomposes into the $\mu\otimes\epsilon$-orthogonal sum of three subspaces, respectively

\noindent{\it (i)} {\it homothetic velocities} (proportional to $x$), corresponding to changes of size,

\noindent{\it (ii)} {\it rotational velocities}, corresponding to solid body motions,

\noindent{\it (iii)} {\it deformation velocities}, corresponding to deformations of the shape:
 $$y=y_h+y_r+y_d\in{\mathcal D}\otimes E.$$
 This is the so-called {\it Saari decomposition of velocity}.  

\noindent One checks that $||y_h||^2=J^2/I$. Morever, as we have no a priori knowledge of  the changes which affect the shape of the configuration we can only assert that $||y_d||^2\ge 0$. Hence Sundman's inequality is equivalent to $||y_r||^2\ge |{\mathcal C}|^2/I$.
and equality occurs if 
on the one hand there is no deformation velocity: $y_d=0$, and on the other hand $y_r^2=|{\mathcal C}|^2/I$, which is equivalent to $x$ and $y$ belonging to a given complex line in the fixed space. 

\noindent Hence if a motion $t\mapsto x(t)=(\vec r_1(t),\cdots,\vec r_n(t))$ is such that the equality is verified at each instant $t$, there exists a fonction $t\mapsto \lambda(t)\in\C$, necessarily of class $C^\infty$ (project one body on some axis), such that for all $t$, $\dot x(t)=\lambda(t)x(t)$, where the multiplication is given by the hermitian structure defined by the angular momentum. The unicity of the solutions of differential equations implies that there exists a function $t\mapsto\zeta(t)\in\C$, of class $C^\infty$,  such that $x(t)=\zeta(t)x_0$. Using the invariance of $U$ under rotation and its homogeneity, we get
$\ddot x=\ddot\zeta x_0=\nabla U(\zeta x_0)=\frac{\zeta}{|\zeta|^3}\nabla U(x_0)$. Moreover, taking the scalar product with $x_0$ and using Euler identity $x_0\cdot\nabla U(x_0)=-U(x_0)$, we get
$\frac{\ddot \zeta}{\zeta}|\zeta|^3=-\frac{U(x_0)}{I(x_0)}$. Finally, we have 
\begin{equation*}
\left\{
\begin{split}
\nabla U(x_0)=-\frac{U(x_0)}{I(x_0)}x_0,\\
\ddot\zeta=-\frac{U(x_0)}{I(x_0)}\frac{\zeta}{|\zeta|^3}.
\end{split}
\right.
\end{equation*}
\begin{definition} A configuration $x$ is central if, when released with 0 initial velocities, it collapses homothetically on its center of mass. Equivalently, there exists $\lambda\in\R$ such that
$\nabla U(x)=\lambda x$ (necessarily $\lambda=-\frac{U(x)}{I(x)}$), that is $x$ is a critical point of $U$ among configurations with fixed $I$.
\end{definition}
The first equation asserts that $x_0$ is a {\it central configuration}. In the case of three bodies, these configurations were described by Euler and Lagrange (see below) 
but describing the possible central  configurations with more than three bodies is a very
difficult problem. 
\smallskip

\noindent The second one is {\it Kepler equation} in $\C\equiv\R^2$. 
It implies that the
{\it complex homothetic motions} $x(t)=\zeta(t)x_0$ are Kepler-like: each body follows a Keplerian trajectory and, in particular, obeys Kepler laws. 
Notice that in the case of a {\it (real) homothetic motion} as in the above definition of central configurations, Sundman's equality becomes $IK-J^2=0$.
\smallskip

\noindent Finally, it is not astonishing that the ``simplest" (in the sense that Sundman's equality is realized) solutions of the $n$-body problem are exactly those which generalize the solutions of the 2-body problem or, what is the same, the solutions of the {\it Kepler problem} of attraction by a fixed center with the inverse square law, where there is no ``shape" and the configuration stays constant up to similarity.  In the next section, we briefly recall the integration of  the Kepler problem in the case of negative energy.

\subsection{Elliptic Kepler motions}
The Kepler equation in $\C\equiv\R^2$ is 
$$\ddot\zeta=-k\frac{\zeta}{|\zeta|^3}\cdot$$
We set
\begin{equation*}
\begin{split}
&\zeta=re^{iv},\, I=|\zeta|^2=r^2,\, J=Re\,\bar\zeta\dot\zeta=\frac{1}{2}\dot I=r\dot r,\, C=Im\,\bar\zeta\dot\zeta,\,  K=|\dot\zeta|^2,\\
&U=\frac{k}{r}=k\rho,\, H=\frac{1}{2}K-U=-\frac{1}{2a},\, k^2+2Hc^2=k^2-\frac{C^2}{a}=k^2e^2.
\end{split}
\end{equation*}
As expected we have $IK-J^2-C^2=0$, which we write in two ways
$$aJ^2+(r-ka)^2-k^2a^2e^2=0\quad\hbox{or}\quad \dot r^2=2H+2k\rho-c^2\rho^2.$$
The first equations defines an ellipse in the $(r,J=r\dot r)$ plane. On the other hand, if $C\not=0$, the second equation defines an ellipse in the $(\dot r,\rho=\frac{1}{r})$ plane. Playing with the parametrization of these ellipses, we get the well-known (?) equations of 
elliptic (i.e. with negative energy) Kepler motion:
\begin{equation*}
\begin{split}
&r=\frac{c^2}{k(1+e\cos v)},\; \xi=ka(\cos u-e),\, \eta=ka\sqrt{1-e^2}\sin u,\\
&t-t_0=ka^{\frac{3}{2}}(u-e\sin u)=ka^{\frac{3}{2}}l,
\end{split}
\end{equation*}
where $\zeta=re^{iv}=\xi+i\eta$, and $u,v,l$ are respectively the {\it eccentric anomaly}, the {\it true anomaly} and the {\it mean anomaly}. 
The motion takes place on an ellipse of {\it eccentricity} $e$ and {\it semi-major axis} $ka$, the extreme cases $e=0$ and $e=1$ corresponding respectively to circular and collinear motions.

\section{Simple motions originating from the symmetries of the $n$-body problem}
An $n$-body solution for which Sundman's inequality is at each moment an equality is such that the configuration of the bodies stays constant up to similarity. We shall prove a partial converse of this property: an $n$-body solution whose configuration stays constant up to similarity realizes equality in Sundman's inequality except maybe in the case where it is a rigid motion in a space of dimension at least 4, in which case $IK-J^2-|{\mathcal C}|^2$ is constant but maybe strictly positive.   

\subsection{Rigid motions: balanced configurations}

A solution $x(t)$ of the $n$-body problem is called {\it rigid} if the relative configuration $\beta(t)={}\!^tx(t)\circ\epsilon\circ x(t)$ does
not depend on time. The following proposition is a pleasant application of the form that we gave to the reduced equations (in fact I do not know of any other proof):
\begin{proposition}
A motion is rigid if and only if it is a motion of {\it relative equilibrium},
that is if it defines an equilibrium of the relative equations. In other words, a solution of the $n$-body problem satisfies $\gamma\equiv 0$  if and only if it satisfies $\dot{\mathcal E}\equiv 0$, that is
$\gamma\equiv 0,\,
\delta+{}^t\! A\circ\beta+\beta\circ A\equiv 0,\,
[A,\rho)=[A,\beta)\equiv 0.$ 
\end{proposition}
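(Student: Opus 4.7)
The direction ``relative equilibrium $\Rightarrow$ rigid'' is immediate: $\dot{\mathcal E}\equiv 0$ gives in particular $\dot\beta\equiv 0$, whence $\gamma=\dot\beta/2\equiv 0$. For the converse I assume $\gamma\equiv 0$ and read off consequences from the four reduced equations one at a time.

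First, $\dot\beta=2\gamma=0$, so $\beta$ is constant. Since the Wintner-Conley endomorphism $A$ depends only on the mutual distances $r_{ij}$, which are themselves encoded in $\beta$, $A$ is constant too. The second reduced equation then reads
$$0=\dot\gamma={}^t\!A\circ\beta+\beta\circ A+\delta,$$
which gives both the desired relation $\delta+{}^t\!A\circ\beta+\beta\circ A\equiv 0$ and the fact that $\delta$ is itself constant. Feeding $\gamma\equiv 0$ and $\dot\delta\equiv 0$ into the third reduced equation yields ${}^t\!A\circ\rho-\rho\circ A\equiv 0$, i.e.\ the first bracket relation. The fourth equation then collapses to $\dot\rho={}^t\!A\circ\beta-\beta\circ A=:c$, a \emph{constant} endomorphism, so $\rho(t)=\rho_0+tc$.

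The only non-formal point — and the main obstacle — is to show $c=0$. The reduced dynamical equations by themselves allow $\rho$ to drift linearly in $t$; what kills this possibility is the external constraint that $\mathcal E={}^t\!z\circ\epsilon\circ z$ is a Gram matrix, hence a positive semidefinite symmetric bilinear form on $({\mathcal D}^*)^2$. Since $\beta$ and $\delta$ are now known to be constant, the moment of inertia $I=\mathrm{tr}(\mu\circ\beta)$ and the kinetic-energy norm $\|y\|^2=\mathrm{tr}(\mu\circ\delta)$ are bounded in time; by Cauchy--Schwarz the antisymmetric block $\rho=\tfrac12({}^t\!y\circ\epsilon\circ x-{}^t\!x\circ\epsilon\circ y)$ (equivalently, the Schur-complement bound forced by ${\mathcal E}\ge 0$ with diagonal blocks $\beta,\delta$ fixed) is bounded uniformly in $t$. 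Linear growth of $\rho$ is therefore impossible unless $c=0$, which is precisely the second bracket relation ${}^t\!A\circ\beta-\beta\circ A\equiv 0$. This closes the loop: all three listed relations hold and $\dot{\mathcal E}\equiv 0$.
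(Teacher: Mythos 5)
Your argument is correct, but it resolves the one non-trivial point by a genuinely different mechanism than the paper. Both proofs agree on the formal part: $\gamma\equiv 0$ makes $\beta$, hence $A$, hence $\delta=-({}^t\!A\circ\beta+\beta\circ A)$ constant, and the third equation then forces $[A,\rho)\equiv 0$, leaving only $[A,\beta)=0$ to establish. The paper stays purely differential--algebraic: it differentiates $\beta$ once more, getting $\ddddot\beta=-4[A,[A,\beta))=0$, and invokes the identity that $[A,[A,\beta))=0$ forces $[A,\beta)=0$ (a trace computation: pairing $[A,[A,B]]$ with $B$, for $B=\mu\circ\beta$, produces the squared norm of the $\mu^{-1}$-antisymmetric commutator $[A,B]$). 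You instead integrate $\dot\rho=c$ to get $\rho(t)=\rho_0+tc$ and kill $c$ by the positivity of the Gram form ${\mathcal E}$, which bounds the off-diagonal block $\rho$ in terms of the now-constant diagonal blocks $\beta$ and $\delta$. This is arguably more elementary (no commutator identity), and it gives a nice dynamical reading of why rigidity pins down $\rho$. What it costs you is a hidden hypothesis: ``linear growth is impossible'' only if the solution is defined on an unbounded time interval, whereas the paper's argument is local and works on any interval of definition. You should add the one line that closes this: a rigid solution has constant mutual distances, so $\min_{i<j}r_{ij}$ is bounded away from zero and (by the standard continuation theorem for the $n$-body problem) the solution extends to all of $\R$, after which your boundedness argument applies.
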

\begin{proof}  One writes $\dot\beta=\ddot\beta=\dddot\beta=\ddddot\beta\equiv 0$ and one notices that $[A,[A,\beta))=0$ implies $[A,\beta)=0$.
\end{proof}

\noindent This last equation, where only on the shape of the configuration and the forces comes into play, is important enough to deserve a name:
\begin{definition}
An $n$-body configuration such that $[A,\beta)=0$ is called ``balanced" (in french ``\'equilibr\'ee"). 
\end{definition}
The name comes from the fact that these configurations are exactly the
ones which admit a relative equilibrium motion in a space of large enough
dimension ($2n-2$ is of course sufficient for $n$ bodies): the attracting forces can
be exactly balanced by the centrifugal forces. Indeed, given $\beta\ge 0$ such that $[A,\beta)=0$, one obtains a solution ${\mathcal E}$ of the equation 
$\dot{\mathcal E}=0$ by setting $\gamma=\rho=0,\, \delta=-2\beta\circ A$. Notice that the positivity of $\delta$, which is equivalent to the negativity of the bilinear form $A\circ\mu=d\hat U(\beta)$ on the image of $\beta$, is satisfied because of the attractivity of the Newton force. It follows that the quadratic form ${\mathcal E}$ defined in this way is positive and hence of the form ${}^t\! z\circ\epsilon\circ z$ for $z=(x,y):{\mathcal D}^*\to E$ with $\hbox{dim}\,E=2\,\hbox{rank}\,\beta$. 

The following proposition summarizes the properties of relative equilibrium motions.
\begin{proposition}\label{even}
A relative equilibrium motion always takes place in a space of even dimension and its configuration is balanced. 
It is a uniform quasi-periodic rotation of the absolute state $z$, that is: there exists a bivector
$\Omega\in\wedge^2E^*$, independant of time,  such that $\dot z=\epsilon^{-1}\circ\Omega\circ z$.
\end{proposition}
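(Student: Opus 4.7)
The plan is to exploit the rigidity proposition just proved and then build a constant antisymmetric generator of the motion. By that proposition, a relative equilibrium is equivalent to $\gamma \equiv 0$, $\delta + {}^t\!A\circ\beta + \beta\circ A \equiv 0$ and $[A,\beta) \equiv 0$; the last of these is literally the definition of a balanced configuration, which settles that half of the claim. Moreover $\beta$ is constant along the motion, hence so are the mutual distances $r_{ij}$ and the Wintner--Conley matrix $A$, so the Newtonian equation reduces to the linear ODE $\ddot x = 2x\circ A$ with constant coefficients.

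The next step is to exhibit a constant $\epsilon$-antisymmetric operator $W:=\epsilon^{-1}\circ\Omega : E\to E$ such that $W\circ x_0 = y_0$ and $W^2\circ x_0 = 2x_0\circ A$. For this I would define $W$ on $F := \im x_0 + \im y_0$ by
$$W\circ x_0 = y_0 \qquad\text{and}\qquad W\circ y_0 = 2x_0\circ A,$$
extended by zero on the $\epsilon$-orthogonal complement of $F$ in $E$. The compatibility of these formulae on $\im x_0 \cap \im y_0$ and the $\epsilon$-antisymmetry of $W$ on $F$ both reduce to two elementary identities: the block relation ${}^t\!x_0\circ\epsilon\circ y_0 = -\rho$ (antisymmetric, since the symmetric part $\gamma$ vanishes) and $\delta + 2\beta\circ A = -{}^t\!A\circ\beta + \beta\circ A = 0$, which follows from $[A,\beta)=0$. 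Then $\Omega := \epsilon\circ W \in \Lambda^2 E^*$.

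The curve $t\mapsto e^{tW}z_0$ then solves the reduced system: $W$ commutes with $e^{tW}$, and right-multiplication by the constant $A$ trivially commutes with left-multiplication by $e^{tW}$, so one checks at once that $\frac{d}{dt}(e^{tW}\circ x_0) = e^{tW}\circ y_0$ and $\frac{d}{dt}(e^{tW}\circ y_0) = 2 e^{tW}\circ x_0\circ A$. Uniqueness gives $z(t) = e^{tW}z_0$, i.e.\ $\dot z = \epsilon^{-1}\circ\Omega\circ z$; quasi-periodicity is automatic, as the antisymmetric $W$ decomposes in an $\epsilon$-orthonormal basis into $2\times 2$ rotation blocks with frequencies $\omega_k$. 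The main obstacle I foresee concerns the even-dimensionality of the space of motion $\im z(t) = e^{tW}(F) = F$: the restriction $W|_F$ is antisymmetric (so its rank is even), but one must still rule out a fixed direction inside $F$, i.e.\ show $\ker(W|_F) = 0$. The clean route I would pursue is to identify $F$ with the support of the angular momentum: from $\mathcal C\circ\epsilon = S\circ W + W\circ S$ with $S = b\circ\epsilon$ the inertia operator and $\im S = \im x_0$, one should be able to deduce $\im\mathcal C = F$, whence $\dim F$ is even as the rank of any bivector.
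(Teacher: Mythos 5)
Your construction of $W$ is, up to presentation, the same as the paper's: setting $W\circ x_0=y_0$, $W\circ y_0=2x_0\circ A$ is exactly $W\circ z_0=\dot z_0$, and the $\epsilon$-antisymmetry of $\epsilon\circ W$ is exactly the paper's starting observation that $\dot{\mathcal E}=0$ means ${}^t\!z\circ\epsilon\circ\dot z\in\wedge^2{\mathcal D}^2$; the paper then transports this bivector through $({}^t\!z)^{-1}$ (assuming $z$ surjective onto $E=\im z$) to get $\Omega$ with ${}^t\!z\circ\Omega\circ z={}^t\!z\circ\epsilon\circ\dot z$, hence $\dot z=\epsilon^{-1}\circ\Omega\circ z$, and constancy of $\Omega$ by uniqueness of solutions, just as you do. One small omission in your verification: the antisymmetry of $\epsilon\circ W$ tested on $\im y_0\times\im y_0$ does not follow from the two identities you list alone; it produces the term $[A,\rho)$, so you also need the third relative-equilibrium equation $[A,\rho)=0$. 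The same remark applies to the well-definedness of $W$ on $\im x_0\cap\im y_0$ (equivalently $\ker z_0\subseteq\ker\dot z_0$), which is not a consequence of $\gamma=0$ and $\delta+2\beta\circ A=0$ only. Both are harmless since all the equations $\dot{\mathcal E}=0$ are available, but they must be invoked.

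The genuine gap is the one you flagged yourself: even-dimensionality. The identity ${\mathcal C}\circ\epsilon=S\circ W+W\circ S$ only gives $\im{\mathcal C}\subseteq F$; to get the reverse inclusion you would need this anticommutator to be onto $F$, and there is no a priori reason for that when $S$ is merely positive semi-definite — in fact proving it amounts to the very non-degeneracy of $W$ on $F$ that you are trying to establish, so the route as stated is circular. The paper closes this point by a different and purely direct computation: with $z$ surjective, the antisymmetry of ${}^t\!z\circ\epsilon\circ\dot z$ gives ${}^t\!\dot z\circ\epsilon=-{}^t\!z\circ\Omega$, so $\vec r\in\ker\Omega$ forces $\epsilon\vec r\in\ker{}^t\!\dot z=\ker{}^t\!y\,\cap\,\ker({}^t\!A\circ{}^t\!x)$; because ${}^t\!A$ is non-degenerate on $\im{}^t\!x$ (this comes from the attractivity of the Newtonian force, i.e.\ the negativity of $d\hat U$ on $\im\beta$, the same fact that guarantees $\delta=-2\beta\circ A\ge0$), this intersection is $\ker{}^t\!x\cap\ker{}^t\!y=\ker{}^t\!z=0$. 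Hence $\Omega$ is non-degenerate on $E$ and $\dim E$ is even. Note that this step consumes an input about the force law which your argument never invokes; that is a strong hint that the purely algebraic bookkeeping of $S$ and $W$ cannot by itself yield the evenness.
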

\begin{proof} 
The equation $\dot{\mathcal E}=0$ of relative equilibria amounts to the antisymmetry of the tensor
${}^t\! z\circ\epsilon\circ\dot z$, that is
$${}^t\! z\circ\epsilon\circ\dot z\in\wedge^2{\mathcal D}^2.$$
As the motion takes place in the Image of $z:{\mathcal D}^2\to E$, we may suppose that $z$ is surjective. Now the bivector ${}^t\! z\circ\epsilon\circ\dot z$ has its support in $Im {}^t\! z$ and we can define $\Omega\in\wedge^2E^*$ as its image under ${}^t\! z^{-1}$, that is
$${}^t\! z\circ\Omega\circ z={}^t\! z\circ\epsilon\circ\dot z,\quad\hbox{which implies}\quad 
\dot z=\epsilon^{-1}\circ\Omega\circ z.$$
The following diagramm summarizes the situation:
\begin{equation*}
\begin{CD}
({\mathcal D}^*)^2 @> >> ({\mathcal D}^*)^2/Ker z @>{z}>> E \\
@VV  {{}^t\! z\circ\epsilon\circ\dot z}V         @VV {{}^t\! z\circ\epsilon\circ\dot z} V 
@VV {\Omega} V\\
{\mathcal D}^2 @< {}<< Im {}^t\! z @< {{}^t\! z}<< E^*
\end{CD}
\end{equation*}
We have yet to check 

1) that $\Omega$ does not depend on time, but this is a consequence of the unicity of solutions of differential equations (one can also directly check that $\dot\Omega=0$ from the fact that 
${}^t\! z\circ\Omega\circ z={}^t\! z\circ\epsilon\circ\dot z$, being expressed with 
$A,\beta,\gamma,\delta,\rho$, is constant.)

2) that the dimension of $E$ is even, but this is a consequence of the non-degeneracy of $\Omega$ on $E$ (recall that we supposed that $z$ is surjective). Indeed, if $\vec r\in ker\Omega$, then
$({}^t\! \dot z\circ\epsilon)\vec r=-({}^t\! z\circ\Omega)\vec r=0$. As $\dot z=(y,2xA)$, this implies that $\epsilon\vec r$ belongs to the intersection of the kernel of  ${}^t\! y$ and the kernel of ${}^t\!\! A\circ {}^t\! x$, which is the same as the kernel of ${}^t\! x$ because ${}^t\!\! A$ is non degenerate on the image of ${}^t\! x$. Finally, $\epsilon\vec r\in ker {}^t\! z=0$.\end{proof}
\smallskip\goodbreak

\noindent {\bf Other characterizations of balanced configurations.} 
\smallskip

1) A relative configuration $\beta_0$ is balanced if and only if it is a critical point of $U$ among the configurations $\beta$ such that $b=\mu\circ\beta$ belongs to one and the same orbit of the {\it democracy group} $O({\mathcal D}^*)$, that is among the configurations $\beta$ such that $B=\mu\circ\beta$ has a given spectrum. Compare to the similar characterization of central configurations where only the trace $I$ of $B=\mu\circ\beta$ is fixed.
\smallskip

2) A configuration $x$ is balanced if and only if there exists a symmetric homomorphism $\Sigma:E\to E^*$ such that 
$\nabla U(x)=\epsilon^{-1}\circ\Sigma\circ x.$ Indeed, recalling that $\nabla U(x)=2x\circ A$ and $\mu\circ{}^t\! A=A\circ\mu$, 

{\it (i)} if $2x\circ A=\epsilon^{-1}\circ\Sigma\circ x$, we have
$2B\circ A=\mu\circ{}^t\! x\circ\Sigma\circ x=2\mu\circ{}^t\! A\circ{}^t\! x\circ\epsilon\circ x=2A\circ\mu\circ{}^t\! x\circ\epsilon\circ x=2A\circ B$;

{\it (ii)} conversely, if $A\circ B=B\circ A$ we have found $\Omega$ such that $\dot z=\epsilon^{-1}\circ\Omega\circ z$, which implies $\nabla U(x)=(\epsilon^{-1}\circ\Omega)^2\circ x$ and we set $\epsilon^{-1}\circ\Sigma=(\epsilon^{-1}\circ\Omega)^2$
\smallskip

\noindent Notice that, using this definition, the existence of relative equilibrium motion in a space whose dimension  is twice the rank of $x$ is even more transparent: after noticing that one can chose $\Sigma$ such that $\epsilon^{-1}\circ\Sigma$ preserves the image of $x$, it remains only to complexify this image in order to find a space in which $\epsilon^{-1}\circ\Sigma$ has an antisymmetric square root $\epsilon^{-1}\circ\Omega$.

\subsection{Homographic motions: central configurations}

One calls {\sl homographic} a solution
$z(t)=\bigl(x(t),y(t)\bigr)$ of equations $(N)$ such that there exists a real function 
$\nu$ of the time and a relative configuration $\beta_0$ with
$\beta(t)=\nu(t)^2\beta_0$, where $\beta(t)={}\!^tx(t)\circ\epsilon\circ x(t)$.
They comprise two important particular cases~:

-- {\sl (real-)homothetic solutions
solutions} $z(t)=\bigl(x(t),y(t)\bigr)$ such that there exists a real function
$\nu$ of time and an absolute configuration $x_0$ with $x(t)=~\nu(t)x_0$.

--{\sl rigid solutions} already studied, such that the relative configuration $\beta(t)$ does
not depend on time.
\smallskip

\noindent We shall see that, at least when they are not relative equilibria, they are exactly the Kepler-like motions which we defined above 
as the ones which realize the equality in Sundman's inequality and hence are {\it complex homothetic}. More precisely, 
the setting that we introduced allows giving short proofs of what follows.

\begin{proposition} \label{central}
The configuration of a non rigid homographic motion is central. 
\end{proposition}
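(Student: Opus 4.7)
The plan is to substitute the homographic ansatz into the reduced Lagrange-Betti equations, exploit the homogeneity of the Newton potential, and compare time derivatives to extract an algebraic identity whose consistency forces the central-configuration condition.

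Writing $\beta(t)=\nu(t)^2\beta_0$, homogeneity of $U$ gives $A(\beta(t))=\nu^{-3}A_0$ with $A_0=A(\beta_0)$. The first two matrix equations of the reduced system then yield immediately
\[
C(t)=\nu\dot\nu\,B_0,\qquad D(t)=(\dot\nu^2+\nu\ddot\nu)\,B_0-\nu^{-1}(A_0B_0+B_0A_0),
\]
and integrating $\dot R=[A,B]=\nu^{-1}[A_0,B_0]$ gives $R(t)=R(0)+[A_0,B_0]\,\phi(t)$ with $\phi(t)=\int_0^t\nu^{-1}\,ds$. Comparing the time derivative of $D$ with the fourth reduced equation $\dot D=2(AC+CA)-2[A,R]$ and eliminating $\dddot\nu$ by the energy relation $(\dot\nu^2+\nu\ddot\nu)\,I_0-U_0/\nu=2H$, one arrives at the key identity
\[
[A_0,R(0)]+[A_0,[A_0,B_0]]\,\phi(t)=\tfrac12\,\nu(t)\dot\nu(t)\,M,\qquad M:=A_0B_0+B_0A_0+\tfrac{U_0}{I_0}B_0.
\]

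Under the non-rigidity hypothesis $\dot\nu\not\equiv 0$, the three scalar functions $1$, $\phi(t)$ and $\nu(t)\dot\nu(t)$ are linearly independent (a short check using the Kepler-like ODE for $\nu$), so matching coefficients forces $M=0$, $[A_0,[A_0,B_0]]=0$ and $[A_0,R(0)]=0$. Since $A_0$ is $\mu^{-1}$-symmetric, hence diagonalizable in an $\mu^{-1}$-orthonormal basis of $\mathcal D^*$, the iterated-commutator condition $[A_0,[A_0,B_0]]=0$ implies the ordinary commutator $[A_0,B_0]=0$. Combined with $M=0$ this gives $A_0B_0=-\frac{U_0}{2I_0}B_0$, so $A_0$ acts on $\im B_0=\im\beta_0$ as the scalar $-U_0/(2I_0)$; equivalently $\nabla U(x_0)=2x_0\circ A_0=-\frac{U_0}{I_0}x_0$, which is precisely the central-configuration condition.

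The main obstacle is the linear-independence step. For $H\neq 0$ it follows directly from the explicit energy relation. In the degenerate parabolic case $H=0$, one sees from integrating $I_0\,d(\nu\dot\nu)/dt=U_0/\nu$ that $\nu\dot\nu=(U_0/I_0)\,\phi+\mathrm{const}$, so only the weaker pair $[A_0,[A_0,B_0]]=\frac{U_0}{2I_0}M$ and $[A_0,R(0)]$ proportional to $M$ follows from coefficient matching; one then still forces $M=0$ (and hence the central condition) by using the $\mu^{-1}$-symmetry of $A_0$ together with the additional algebraic constraint supplied by conservation of angular momentum in this degenerate regime.
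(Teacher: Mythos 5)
Your route is genuinely different from the paper's. The paper diagonalizes $A_0$ in a $\mu^{-1}$-orthonormal basis, observes that the images $x(u_i)$ of the eigenvectors solve scalar Kepler equations with constants $a_i$ but with a common norm $I(t)^{1/2}$, and then uses the Lagrange--Jacobi relation $2(H_i-H_j)=2(a_i-a_j)I^{-1/2}$ to force $a_i=a_j$ on the support of $\beta_0$ whenever $I$ is non-constant. You instead substitute the ansatz into the reduced matrix system and match coefficients of functions of time. Your computation up to the key identity $[A_0,R(0)]+[A_0,[A_0,B_0]]\,\phi(t)=\frac{1}{2}\nu\dot\nu\,M$ is correct, and for $H\neq 0$ the argument does close: the Lagrange--Jacobi relation integrates to $\nu\dot\nu=c_0+\frac{2H}{I_0}t+\frac{U_0}{I_0}\phi(t)$, non-rigidity makes $1$, $t$, $\phi$ linearly independent, and the coefficient of $t$ gives $HM=0$, hence $M=0$ and centrality as you describe.

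The genuine gap is the parabolic case $H=0$, which you flag but do not close. There $\nu\dot\nu$ is an affine function of $\phi$, so coefficient matching only yields $[A_0,[A_0,B_0]]=\frac{U_0}{2I_0}M$ together with $[A_0,R(0)]$ proportional to $M$, and this does not by itself force $M=0$: both sides already have zero trace (the left because it is a commutator, the right because $\mathrm{tr}(A_0B_0)=-\frac{1}{2}U_0$ by Euler's identity, so $\mathrm{tr}\,M=0$), so no contradiction comes from invariants, and the appeal to ``conservation of angular momentum'' is not substantiated --- it is not said which additional identity in the reduced variables $B,C,D,R$ it would supply. The case can be closed, but by a different observation which you do not make: in a $\mu^{-1}$-orthonormal eigenbasis of $A_0$ the relation reads $(a_i-a_j)^2b_{ij}=\frac{U_0}{2I_0}\bigl(a_i+a_j+\frac{U_0}{I_0}\bigr)b_{ij}$; its diagonal entries give $\bigl(2a_i+\frac{U_0}{I_0}\bigr)b_{ii}=0$, and since $B_0\ge 0$ any nonzero $b_{ij}$ forces $b_{ii}\neq 0$ and $b_{jj}\neq 0$, hence $a_i=a_j=-\frac{U_0}{2I_0}$ on the support of $B_0$, which is exactly the central-configuration condition. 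Note that the paper's argument needs no such case distinction, since its relation between the $H_i$ and the $a_i$ works uniformly in the energy; your version, once the $H=0$ case is repaired, has the side benefit of also producing $[A_0,B_0]=0$ and $[A_0,R(0)]=0$ explicitly.
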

This proposition becomes false if the Newton potential is replaced by the Jacobi-Banachiewitz potential.
\smallskip

\noindent The definition of a {\it central configuration} was given when studying the cases of equality in Sundman's inequality. Notice that a central configuration is
balanced because one deduces from the definition that
$2\beta\circ A=\lambda\beta={}^t\!(\lambda\beta)=2{}^t\! A\circ\beta$.  
\smallskip

\noindent The description of (real-)homothetic solutions is easy :
for any normalized (i.e. $I=1$) central configuration $x_0$ and any real solution
$\zeta(t)$ of the differential equation 
$\ddot\zeta=2\kappa U(x_0)|\zeta|^{2\kappa-2}\zeta$ (essentially $(N)$ in the
one-dimensional case),
$x(t)=\zeta(t)x$ is a homothetic solution. Moreover, every homothetic solution is
of this type.    
\smallskip

\noindent The following proposition shows that non homothetic motions whose configuration is central (this is automatic except for relative equilibria in dimension at least 4) are indeed {\sl complex
homothetic} : 
\begin{proposition}\label{hermitian*}
The space of motion $\im z$ of a homographic,
non homo\-thetic, solution with central configuration, coincides with the fixed
space. For the complex structure on $\Hom({\mathcal D}^*,\im z)$ induced by the angular
momentum, $y$ is at any time a complex multiple of $x$ : if
$x_0=\|x(0)\|^{-1}x(0)$ is the normalized initial configuration, $x(t)=\zeta(t)x_0$
where 
$\zeta$ is a complex function of the time satisfying
$\ddot\zeta=-U(x_0)|\zeta|^{-3}\zeta$. 
Inversely, any complex solution of this differential equation gives rise to a complex
homothetic solution.
\end{proposition}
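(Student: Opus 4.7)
The plan is to prove the forward direction (that any non-homothetic homographic motion with central configuration is complex-homothetic) by reducing Newton's equation to a scalar ODE, extracting a hermitian complex structure at the initial time, and then invoking Proposition~\ref{bestSund} to upgrade this local structure to a global statement; the converse is a short direct verification.

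For the forward direction, write the homographic motion in polar form $x(t)=\nu(t)R(t)x_0$ with $R(t)\in O(E)$, $\nu(t)=\|x(t)\|$, and $\|x_0\|=1$. Centrality of $x_0$ gives $\nabla U(x_0)=-U(x_0)x_0$, and rotational invariance together with the degree-$(-1)$ homogeneity of $U$ then yield
$$\ddot x \;=\; \nabla U(x(t)) \;=\; -U(x_0)\,\nu(t)^{-3}\,x(t),$$
a time-dependent scalar second-order ODE. Decomposing the initial velocity by the Saari decomposition (the deformation part vanishes for a homographic motion) as $y_0=\lambda_0 x_0+\Omega_0 x_0$ with $\Omega_0\in\mathfrak{so}(E)$ nonzero (by non-homotheticity), and differentiating the homographic identity $\beta(x(t))=\nu(t)^2\beta_0$ twice at $t=0$, one obtains the algebraic constraint
$${}^tx_0\circ\epsilon\circ\Omega_0^2\circ x_0 \;=\; -\omega_0^2\,\beta_0,\qquad \omega_0^2>0.$$
Polarising this identity yields $\langle\Omega_0 v,\Omega_0 w\rangle_\epsilon=\omega_0^2\langle v,w\rangle_\epsilon$ for all $v,w\in\im x_0$, so on the $\Omega_0$-invariant subspace $W=\im x_0+\Omega_0(\im x_0)=\im z$ the map ${\mathcal J}:=\omega_0^{-1}\Omega_0|_W$ is antisymmetric with ${\mathcal J}^2=-\mathrm{Id}_W$, i.e.\ a hermitian complex structure on $W$, and $y_0=(\lambda_0+i\omega_0)\cdot x_0$. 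This is precisely the equality case of the complex Schwarz inequality at $t=0$ with $\Omega=\epsilon\circ{\mathcal J}$, so Proposition~\ref{bestSund} applies and yields $W=F=\im z$ together with ${\mathcal J}={\mathcal J}_{\mathcal C}$. Setting $\zeta(t)\in\C$ by $x(t)=\zeta(t)x_0$ — consistent since the scalar ODE preserves the two-real-dimensional complex line $\mathrm{span}_\R(x_0,{\mathcal J}x_0)$ — substitution into the scalar Newton equation gives $\ddot\zeta=-U(x_0)|\zeta|^{-3}\zeta$.

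For the converse, given $\zeta(t)\in\C$ solving the Kepler equation, set $x(t):=\mathrm{Re}\,\zeta\cdot x_0+\mathrm{Im}\,\zeta\cdot{\mathcal J}_{\mathcal C}x_0$. Since ${\mathcal J}_{\mathcal C}$ is an $\epsilon$-isometry with ${\mathcal J}_{\mathcal C}^2=-\mathrm{Id}$ and $\langle x_0,{\mathcal J}_{\mathcal C}x_0\rangle=0$, one computes $\beta(x(t))=|\zeta|^2\beta_0$ (so the motion is homographic), and then rotational invariance and homogeneity of $U$ give $\nabla U(x(t))=-U(x_0)|\zeta|^{-3}x(t)=\ddot x(t)$. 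The principal obstacle in the forward argument is the step $\Omega_0^2|_W=-\omega_0^2\,\mathrm{Id}_W$: the constraint ${}^tx_0\epsilon\Omega_0^2 x_0=-\omega_0^2\beta_0$ only directly pins down the $\im x_0$-component of $\Omega_0^2(\im x_0)$, and one must combine the $\omega_0$-conformality of $\Omega_0$ on $\im x_0$ with its antisymmetry on $E$ to conclude both that $W$ is $\Omega_0$-invariant and that the square restricts cleanly to $-\omega_0^2\,\mathrm{Id}_W$. Once this bookkeeping is in place, the remainder is a formal consequence of the machinery developed in the earlier sections.
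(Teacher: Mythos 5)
Your strategy is sound and reaches the same conclusions, but by a genuinely different route from the paper's. The paper never leaves the reduced setting: from the relative equations and centrality it derives $\gamma=J\beta_0$ and $\delta=K\beta_0$, observes that ${\mathcal E}_0\circ j^*$ is then antisymmetric, and extracts the complex structure on all of $\im z$ by the same surjectivity argument as in Proposition \ref{even} (existence of an antisymmetric $\Omega_0$ with $z_0\circ j^*=\epsilon^{-1}\circ\Omega_0\circ z_0$, whence $(\epsilon^{-1}\circ\Omega_0)^2=-\mathrm{Id}$ because $z_0$ is onto); the identification $\Omega_0=\Omega_{\mathcal C}$ via Proposition \ref{bestSund} and the Kepler ODE then follow as in your write-up. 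Your absolute-frame computation is in fact equivalent at the key point: the constraint ${}^t(\Omega_0x_0)\circ\epsilon\circ(\Omega_0 x_0)=\omega_0^2\beta_0$ obtained from $\ddot\beta$ is exactly $\delta=K\beta_0$ in disguise (the cross terms between $\lambda_0x_0$ and $\Omega_0x_0$ cancel by antisymmetry), so the real divergence is in how the hermitian structure is manufactured from this Gram identity. There the step you flag as the principal obstacle is genuinely the crux, and as literally written ("${\mathcal J}:=\omega_0^{-1}\Omega_0|_W$ is antisymmetric with ${\mathcal J}^2=-\mathrm{Id}_W$") it is false: an antisymmetric $\Omega_0$ on $E$ that is $\omega_0$-conformal on $V=\im x_0$ need not preserve $W=V+\Omega_0V$, and only the orthogonal projection onto $V$ of $\Omega_0^2v$ is pinned down (take $\Omega_0e_1=e_2$, $\Omega_0e_2=-e_1+e_3$ in $\R^4$ with $V=\R e_1$). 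The repair is to keep $\Omega_0$ on $V$ but redefine it on $\Omega_0V$ by $\Omega_0v\mapsto-\omega_0^2v$; conformality makes this consistent on $V\cap\Omega_0V$ and the resulting endomorphism of $W$ is antisymmetric, squares to $-\omega_0^2\mathrm{Id}_W$, and still represents the same rotational velocity, after which Proposition \ref{bestSund} applies as you say. This is elementary but must actually be carried out; the paper's Lemma avoids it entirely by constructing $\Omega_0$ from the full state $z_0=(x_0,y_0)$, which is surjective onto the space of motion, rather than from $x_0$ alone, and thus needs no case analysis on how $\im x_0$ sits inside $\im z$. Your finish (invariance of $\mathrm{span}_\R(x_0,{\mathcal J}x_0)$ under the linear ODE $\ddot x=-U(x_0)\nu^{-3}x$) and your explicit converse are correct and indeed supply details the paper delegates to section \ref{decvel} or omits.
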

One deduces from this proposition that a homographic motion with central
configuration, in particular any non rigid homographic motion, is a generalized
Keplerian motion: all bodies describe  similar conics around the center of mass.
\medskip

\begin{proof}[Proof of proposition \ref{central}]
Let $z(t)$ be a solution whose relative configuration $\beta(t)$ is of the form $\beta(t)=\nu(t)^2\beta_0$. We suppose that the initial relative configuration $\beta_0$ is normalized in such a way that $I_0=trace\,\mu\circ\beta=1$. Then $I(t)=\nu(t)^2$ and by homogeneity, we have at any time
$$\beta=I\beta_0,\; U=I^{-\frac{1}{2}}U_0,\; A=I^{-\frac{3}{2}}A_0.$$
We choose a $\mu^{-1}$-orthonormal basis $\{u_1,\cdots,u_{n-1}\}$ of the vector space ${\mathcal D}^*$ in which the $\mu^{-1}$-symmetric endomorphism $A_0$ is represented by a diagonal matrix $A_0=diag(a_1,\cdots,a_{n-1})$.   
We call $b_1,\cdots,b_{n-1}$ the diagonal terms of the matrix representing $B_0=\mu\circ\beta_0$ in this basis. Recalling that $\beta=x^*\epsilon$, we have 
$$||x(u_i)||_\epsilon^2=||u_i||_\beta^2=I||u_i||_{\beta_0}^2=Ib_i,$$
the last equality coming from the fact that the basis $\{u_1,\cdots,u_{n-1}\}$ is orthonormal.
Now, for each index $i$ such that $b_i\not=0$, we define
$$\vec s_i=b_i^{-\frac{1}{2}}x(u_i)\in E.$$
The functions $t\mapsto\vec s_i(t)$ satisfy the equations
$$\ddot{\vec {s_i}}=2||\vec s_i||^{-3}a_i\vec s_i\quad\hbox{and}\quad ||\vec s_i||^2=I.$$
In other words, they are {\it solutions of a priori different Kepler equations but their norms coincide at each instant}. If the $\vec s_i(t)$ are circular solutions, nothing else can be concluded. But  if they are not circular, that is  {\it if $I$ is not constant}, this implies that the Kepler equations are all the same: $a_i=a$ is independant of $i$. Indeed, if we set
$$I_i=||\vec s_i||^2,\; J_i=<\vec s_i,\dot{\vec {s_i}}>,\; K_i=||\dot{\vec{s_i}}||^2,\; 
H_i=\frac{1}{2}K_i+2a_iI^{-\frac{1}{2}},$$
the  ``energies" $H_i$ are constant. Now, by hypothesis, all the $I_i$ coincide with the function $I$ and hence all the $J_i$ coincide as well as all the $\dot J_i$. By the Lagrange-Jacobi identity, we have $\dot J_i=2H_i-2a_iI_i^{-\frac{1}{2}}$, hence
$$2(H_i-H_j)=\dot J_i-\dot J_j+2a_iI_i^{-\frac{1}{2}}-2a_jI_j^{-\frac{1}{2}}
=2(a_i-a_j)I^{-\frac{1}{2}}$$
can be constant only if either $I$ is constant or $a_i=a_j$. This proves that, if $I$ is not constant, the restriction of $A$ to Im $x$ is of the form $A|_{Im\, x}=a\,$Id and hence that $x$ is a central configuration. Note that the above computation amounts to saying the obvious fact that {\it if an elliptic Keplerian motion is not circular, the function $I$ determines its period.} Here also, the conclusion does not hold for the Jacobi-Babachiewitz case of a force proportional to the inverse cube of the distance.
\end{proof}
\begin{proof}
[Proof of proposition \ref{hermitian*}]
We consider now a homographic motion which is not homothetic and whose configuration is central (recall that this last condition is automatically satisfied if the motion is not a relative equilibrium). 

\noindent Setting $I=||x||^2,\, J=<x,\dot x>,\, K=||\dot x||^2$, the hypothesis implies $IK-J^2\not=0$.
We have
$$\beta(t)=I(t)\beta_0,\; \gamma(t)=J(t)\beta_0,\; \delta(t)=K(t)\beta_0.$$
To prove the last identity, we write
$$2\dot\gamma=\ddot I\beta_0=2\delta+2I^{-\frac{1}{2}}({}^t\! A_0\circ\beta_0+\beta_0\circ A_0).$$
By hypothesis, $2x_0\circ A_0=2\lambda x_0$, hence $\beta_0\circ A_0=\lambda\beta_0=\lambda {}^t\! \beta_0={^t\! A_0\circ\beta_0}$. Finally, 
$${\mathcal E}={}^t\! z\circ\epsilon\circ z=\begin{pmatrix}
I\beta_0&J\beta_0-\rho\\
J\beta_0+\rho&K\beta_0
\end{pmatrix}.$$
We want to prove that the velocity configuration $y$ is a each time the composition $y=\zeta x$ of the configuration $x$ with the multiplication by some complex number $\zeta=\xi+i\eta$ in some hermitian structure on ${\mathcal D}\otimes E$. 
As $ix$ must be $\mu\otimes\epsilon$-orthogonal to $x$, one must have 
$$\xi=\frac{J}{I},\quad  i\eta x=y-\frac{J}{I}x.$$
Using the fact that the multiplication by $i$ in ${\mathcal D}\otimes E$ is antisymmetric, one computes 
${}^t\! y\epsilon y=(\frac{J^2}{I}+I\eta^2)\beta_0$, which implies that (choosing the complex structure in such a way that $\eta>0$),
$$y=\frac{1}{I}\left[J+i\sqrt{IK-J^2}\right]x.$$
Setting $c=\sqrt{IK-J^2}\not=0$, we normalize $z$ by replacing it by $z_0=(x_0,y_0)$ in such a way that $y_0=ix_0$ by setting 
$$x_0=\sqrt{\frac{c}{I}}x,\; y_0=\frac{1}{\sqrt{cI}}(Iy-Jx),\quad\hbox{hence}\quad  
{\mathcal E}_0={}^t\! z_0\circ\epsilon\circ z_0=
\begin{pmatrix}
c\beta_0&-\rho\\
\rho&c\beta_0
\end{pmatrix}.$$
\begin{lemma} There exists a hermitian structure ${\mathcal J}_0$ on $E$ such that
$$y_0={\mathcal J_0}\circ x_0.$$
\end{lemma}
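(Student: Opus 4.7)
The plan is to exhibit $\mathcal{J}_0$ on the space of motion by transporting the canonical complex structure on $({\mathcal D}^*)^2$ to $\im z_0$ via $z_0$. Concretely, let $j:({\mathcal D}^*)^2\to ({\mathcal D}^*)^2$ be the complex structure $j(\xi,\eta)=(-\eta,\xi)$. I will show there is a unique endomorphism $\mathcal{J}_0$ of $\im z_0$ satisfying
$$\mathcal{J}_0\circ z_0 = z_0\circ j.$$
Specializing this identity to vectors of the form $(\xi,0)$ in $({\mathcal D}^*)^2$ gives exactly $\mathcal{J}_0\circ x_0 = y_0$, which is the content of the lemma.

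First I would reduce to the case where $z_0:({\mathcal D}^*)^2\to E$ is surjective, so $E=\im z_0$ is the space of motion (at the end, extend $\mathcal{J}_0$ by zero on the $\epsilon$-orthogonal complement to obtain a possibly degenerate hermitian structure on the original $E$). Under this reduction $\ker \mathcal{E}_0=\ker z_0$ because $\epsilon$ is positive definite. So to make the formula $\mathcal{J}_0\circ z_0=z_0\circ j$ well-defined on $\im z_0$, it is enough to verify that $j$ preserves $\ker z_0$, and for this it suffices to verify that $j$ preserves the quadratic form $\mathcal{E}_0$, i.e.
$${}^t\! j\circ \mathcal{E}_0\circ j = \mathcal{E}_0.$$

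The main (and only) substantive step is this invariance identity. It is a routine $2\times 2$ block-matrix calculation, but this is precisely where the special shape of $\mathcal{E}_0$ is used: one needs that the two diagonal blocks of $\mathcal{E}_0$ agree (both equal $c\beta_0$) and that the off-diagonal block $\rho$ is antisymmetric. These are exactly the two features that the normalization from $z$ to $z_0$ was designed to secure; the rest of the proof is purely formal. The conceptual content is that after normalization $z_0$ becomes a complex mapping in the sense of Proposition \ref{bestSund}.

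Once $\mathcal{J}_0$ is defined on $\im z_0$ by $\mathcal{J}_0\circ z_0=z_0\circ j$, the hermitian properties follow automatically: $\mathcal{J}_0^2=-\mathrm{Id}$ on $\im z_0$ because $j^2=-\mathrm{Id}$ and $z_0$ is onto, and the identity ${}^t\! j\circ\mathcal{E}_0\circ j=\mathcal{E}_0$ translates, via $\mathcal{E}_0={}^t\! z_0\circ \epsilon\circ z_0$, into $\epsilon(\mathcal{J}_0 u,\mathcal{J}_0 v)=\epsilon(u,v)$ for $u,v\in\im z_0$, so $\mathcal{J}_0$ is an $\epsilon$-isometry and $\Omega_0:=\epsilon\circ\mathcal{J}_0$ is antisymmetric. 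Thus $(\epsilon,\mathcal{J}_0,\Omega_0)$ is a hermitian structure on the space of motion, which after the zero-extension is the desired structure on $E$.
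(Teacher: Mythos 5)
Your proposal is correct and follows essentially the same route as the paper: both transport the canonical complex structure $j$ on $({\mathcal D}^*)^2$ through $z_0$ to the space of motion, with the decisive computation being the block identity for ${\mathcal E}_0$ (your invariance ${}^t\! j\circ{\mathcal E}_0\circ j={\mathcal E}_0$ is, given the symmetry of ${\mathcal E}_0$ and $j^2=-\mathrm{Id}$, equivalent to the paper's antisymmetry of ${\mathcal E}_0\circ j^*$), which uses exactly the equality of the diagonal blocks and the antisymmetry of $\rho$ secured by the normalization. The paper phrases the descent via the bivector ${\mathcal E}_0\circ j^*$ supported in $\im{}^t\! z_0$ as in Proposition \ref{even}, while you phrase it via $j$ preserving $\ker z_0=\ker{\mathcal E}_0$; these are the same argument in different clothing.
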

\begin{proof} Let $z_0=(x_0,y_0)$. We are looking for a hermitian structure on $E$, i.e. an $\epsilon$-isometry ${\mathcal J}_0$ whose square is $-Id_E$, which defines the multiplication by $i=\sqrt{-1}$. It is necessarily of the form ${\mathcal J}_0=\epsilon^{-1}\circ\Omega_0$, where $\Omega_0:E\to E^*$ is antisymmetric.  Let $j^*:({\mathcal D}^*)^2\to ({\mathcal D}^*)^2$ be the natural complex stucture defined in section \ref{hermitian} by $j_0(\xi,\eta)=(-\eta,\xi)$. If $y_0={\mathcal J}_0\circ x_0$,  we must have $z_0\circ j^*=(y_0,-x_0)={\mathcal J}_0\circ z_0$, hence 
${\mathcal E}_0\circ j^*={}^t\! z_0\circ\Omega_0\circ z_0$ must be antisymmetric. This is indeed the case:
$${\mathcal E}_0\circ j^*={}^t\! z_0\circ\epsilon\circ z_0\circ j^*=
\begin{pmatrix}
-\rho&-c\beta_0\\
c\beta_0&-\rho
\end{pmatrix}.$$
Hence, as in proposition \ref{even}, supposing $z_0$ surjective, one deduces the existence of $\Omega_0:E\to E^*$ antisymmetric such that $z_0\circ j^*=\epsilon^{-1}\circ\Omega_0\circ z_0$.

\noindent Finally, $y_0=\epsilon^{-1}\circ\Omega_0\circ x_0$ and $-z_0=z_0\circ(j^*)^2=(\epsilon^{-1}\circ\Omega_0)^2\circ z_0$ which implies that $(\epsilon^{-1}\circ\Omega_0)^2=-Id_E$ because $z_0$ is surjective.
\end{proof}

\noindent We summarize the situation in the following commutative diagram:
\begin{equation*}
\begin{CD}
({\mathcal D}^*)^2 @> z_0 >> E @>{Id}>> E  \\
@VV  {j^*}V   @VV {\mathcal J}_0={\epsilon^{-1}\circ\Omega_0} V 
@VV {\Omega_0} V\\
({\mathcal D}^*)^2 @> z_0>> E @> {\epsilon}>> E^*@>{{}^t\! z_0}>>{\mathcal D}^2
\end{CD}
\end{equation*}
Coming back to the original configuration $z=(x,y)$, we have proved that 
$y=\frac{1}{I}\left[J+ic\right]x$, where the action of $\C$ on the configuration space is given by the complex structure ${\mathcal J}_0=\epsilon^{-1}\circ\Omega_0$. 

\noindent The angular momentum of $z=(x,y)=\left(x,\frac{1}{I}(Jx+c\epsilon^{-1}\circ\Omega_0\circ x)\right)$ is
$${\mathcal C}=\frac{c}{I}(b\circ\Omega_0\circ\epsilon^{-1}+\epsilon^{-1}\circ\Omega_0\circ b),\;\hbox{where}\; b=x\circ\mu\circ{}^t\! x\;\hbox{(= inertia = $S\circ\epsilon^{-1}$)}.$$ 
As the inertia endomorphism $S=b\circ\epsilon$ has the same trace $I$ as $\mu\circ\beta$, we see that the complex Schwarz inequality for 
$\Omega_0$ is an equality :
$$y\cdot({\mathcal J}_0\circ x)=\frac{1}{2}\langle{\mathcal C},\Omega_0\rangle=\frac{1}{2}trace({}^t\!{\mathcal C}\circ\Omega_0)=\frac{c}{I}trace S=c=\sqrt{IK-J^2}.$$
From proposition \ref{bestSund}, we deduce that $\Omega_0=\Omega_{\mathcal C}$. 
The end of the proof of Proposition \ref{hermitian*} follows from section \ref{decvel}.
\end{proof}

\subsection{Equations for the balanced and central configurations}

To write down usable equations of these configurations it is convenient
to represent bilinear forms on ${\mathcal D}$ or on ${\mathcal D}^*$ by $n\times n$
matrices. Such a representation is unique in the case of ${\mathcal D}$ but ambiguous in
the case of ${\mathcal D}^*$ where we have to chose an extension of the form to $\R^n$. 
Repre\-senting $\beta$ by the matrix
of general term $-{1\over 2}s_{ij}:=\!-{1\over 2}r_{ij}^2$,
$d\hat U$ by the matrix of general term $-{\partial\hat
U\over\partial s_{ij}}$ and $\mu^{-1}$ by the diagonal matrix of the $m_i^{-1}$, one
finds that
$\Pi=\beta\circ d\hat
U\circ\mu^{-1}=\beta\circ A\in \Hom({\mathcal D}^*,{\mathcal D})$ is represented by the matrix
whose coefficients are the 
$$P_{ij}=\frac{1}{2m_j}\sum_{l\neq j}(s_{il}-s_{ij})\frac{\partial \hat U}{
\partial s_{lj}}.$$ 
To get convenient coordinates for the antisymmetric part of $\Pi$, one notices that
the exterior product by $(1,\cdots,1)$ factorizes through an embedding of  
$\bigwedge^2{\mathcal D}$ in  $\bigwedge^3\R^n$.
The coordinates of the image of the bivector $\Pi-{}^t\Pi$ by this injection 
are the 
\begin{equation*}
\begin{split}
P_{ijk}&=P_{ij}+P_{jk}+P_{ki}-P_{ik}-P_{kj}-P_{ji}\\
&=-\frac{1}{2}\nabla_{ijk}+\frac{1}{2}\sum_{l\neq ijk}Y_{ijk}^l,
\end{split}
\end{equation*}
with $i<j<k$, where
$$
\nabla_{ijk}=\det\begin{pmatrix}
1/m_i &1/m_j&1/m_k\\
s_{jk}-s_{ki}-s_{ij}&s_{ki}-s_{ij}-s_{jk}&s_{ij}-s_{jk}-s_{ki}\\
{\partial \hat U}/{\partial s_{jk}}
&{\partial \hat U}/{\partial s_{ki}}
&{\partial \hat U}/{\partial s_{ij}}\end{pmatrix},$$
and
$$Y_{ijk}^l=\det\begin{pmatrix}
1&1&1\\
s_{jk}+s_{il}&s_{ki}+s_{jl}&s_{ij}+s_{kl}\\
\frac{\partial \hat U}{\partial s_{il}}
&\frac{1}{m_j}\frac{\partial \hat U}{\partial s_{jl}}
&\frac{1}{m_k}\frac{\partial \hat U}{\partial s_{kl}}
\end{pmatrix}.$$
The equations $P_{ijk}=0$, $i<j<k$, define the balanced configurations. Of
course, if $n$ is strictly greater than three $3$, we get too many equations but this
is nevertheless the best way of writing down the equations. 
\smallskip

\noindent As $\hat U=\sum_{1\leq i<j\leq n}{m_im_j\Phi(s_{ij})}$, where $\Phi$ is defined by 
$\Phi(s)={\mathcal G}s^{-1/2}$ in the Newtonian case, the above equations are {\sl linear in the
masses}~! A most important property of
$\Phi$ is the concavity of its derivative $\varphi$. 
For three bodies, it implies immediately the following proposition : 
\begin{proposition}\label{Iso} In the Newtonian case, a configuration of three bodies of
equal masses is balanced if and only if it is isosceles. 
\end{proposition}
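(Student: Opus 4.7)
\textbf{Proof proposal for Proposition \ref{Iso}.} The plan is to reduce the balanced condition for $n=3$ equal masses to a single scalar equation on the three squared side-lengths, and then invoke the strict concavity of $\varphi=\Phi'$ in the Newtonian case to conclude that two of the squared side-lengths must coincide.

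First, I would specialize the equations $P_{ijk}=0$ to $n=3$. With only three bodies the unique relevant triple is $(1,2,3)$, and the sum $\sum_{l\neq ijk}Y_{ijk}^l$ is empty, so the balanced condition collapses to $\nabla_{123}=0$. Writing $a=s_{23}$, $b=s_{31}$, $c=s_{12}$, setting $m_1=m_2=m_3=m$, and using $\hat U=m^2\sum_{i<j}\Phi(s_{ij})$, the common factors $1/m$ in the top row and $m^2$ in the bottom row pull out and one is left with
$$\nabla_{123}=m\det\begin{pmatrix}1&1&1\\ a-b-c&b-c-a&c-a-b\\ \varphi(a)&\varphi(b)&\varphi(c)\end{pmatrix}.$$

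Next, I would perform the column operations $C_2\leftarrow C_2-C_1$ and $C_3\leftarrow C_3-C_1$. The top row becomes $(1,0,0)$, the middle row becomes $(a-b-c,\,2(b-a),\,2(c-a))$, and the bottom row becomes $(\varphi(a),\,\varphi(b)-\varphi(a),\,\varphi(c)-\varphi(a))$. Expanding along the first row yields
$$\nabla_{123}=2m\bigl[(b-a)(\varphi(c)-\varphi(a))-(c-a)(\varphi(b)-\varphi(a))\bigr].$$
Hence the balanced condition is equivalent to the equality of secant slopes
$$\frac{\varphi(b)-\varphi(a)}{b-a}=\frac{\varphi(c)-\varphi(a)}{c-a},$$
with the obvious convention when two of $a,b,c$ coincide.

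From this form, the two implications are easy. If the configuration is isosceles, two of $a,b,c$ are equal and one of the factors $(b-a)$, $(c-a)$, or (after renaming the apex) the analogous pair vanishes together with the corresponding $\varphi$-difference, so $\nabla_{123}=0$. Conversely, suppose the triangle is scalene, so $a,b,c$ are pairwise distinct. In the Newtonian case $\Phi(s)=\mathcal G s^{-1/2}$ gives $\varphi(s)=-\frac{\mathcal G}{2}s^{-3/2}$, hence $\varphi''(s)=-\frac{15\mathcal G}{8}s^{-7/2}<0$ on $(0,\infty)$, i.e. $\varphi$ is \emph{strictly} concave. For such $\varphi$ the secant-slope function $x\mapsto(\varphi(x)-\varphi(a))/(x-a)$ is strictly monotone on $(0,\infty)\setminus\{a\}$, so the equality of slopes above forces $b=c$, contradicting the scalene assumption.

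The main subtlety, mild as it is, lies in checking that strict concavity of $\varphi$ really gives strict monotonicity of the secant slope through a fixed point (so that one may deduce $b=c$ even when $a$ lies between $b$ and $c$); this follows from the standard ``three-slope'' lemma for concave functions. Once this is in place the proof is complete, and the argument makes transparent why the result is specific to potentials for which $\Phi'$ is strictly concave: any potential whose $\varphi$ is affine would make every three-body configuration balanced.
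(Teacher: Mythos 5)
Your proof is correct and is essentially the argument the paper intends: the paper presents the determinant $\nabla_{ijk}$, notes that for three bodies the balanced condition reduces to $\nabla_{123}=0$, and asserts that the (strict) concavity of $\varphi=\Phi'$ ``implies immediately'' the proposition — you have simply carried out the reduction to the equality of secant slopes and the three-slope argument explicitly. (The paper also sketches a second, variational proof via the shape sphere, but that is presented as an alternative.)
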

\noindent Figure 1 shows the {\it shape sphere}, that is the space $(\R^2)^2/SO(2)$ of triangles of constant moment of inertia, up to oriented similarity in $\R^2$ (see \cite{C1}). The level lines of the potential in the case equal masses are indicated; in this representation, the level lines of the area function are horizontal circles. Using the symmetries of the potential function and the variational characterization of balanced configurations given above (critical points of $U$ among triangles with moment of inertia and area fixed), this gives another proof of Proposition \ref{Iso}.
\begin{center}
\hskip0.6cm\includegraphics{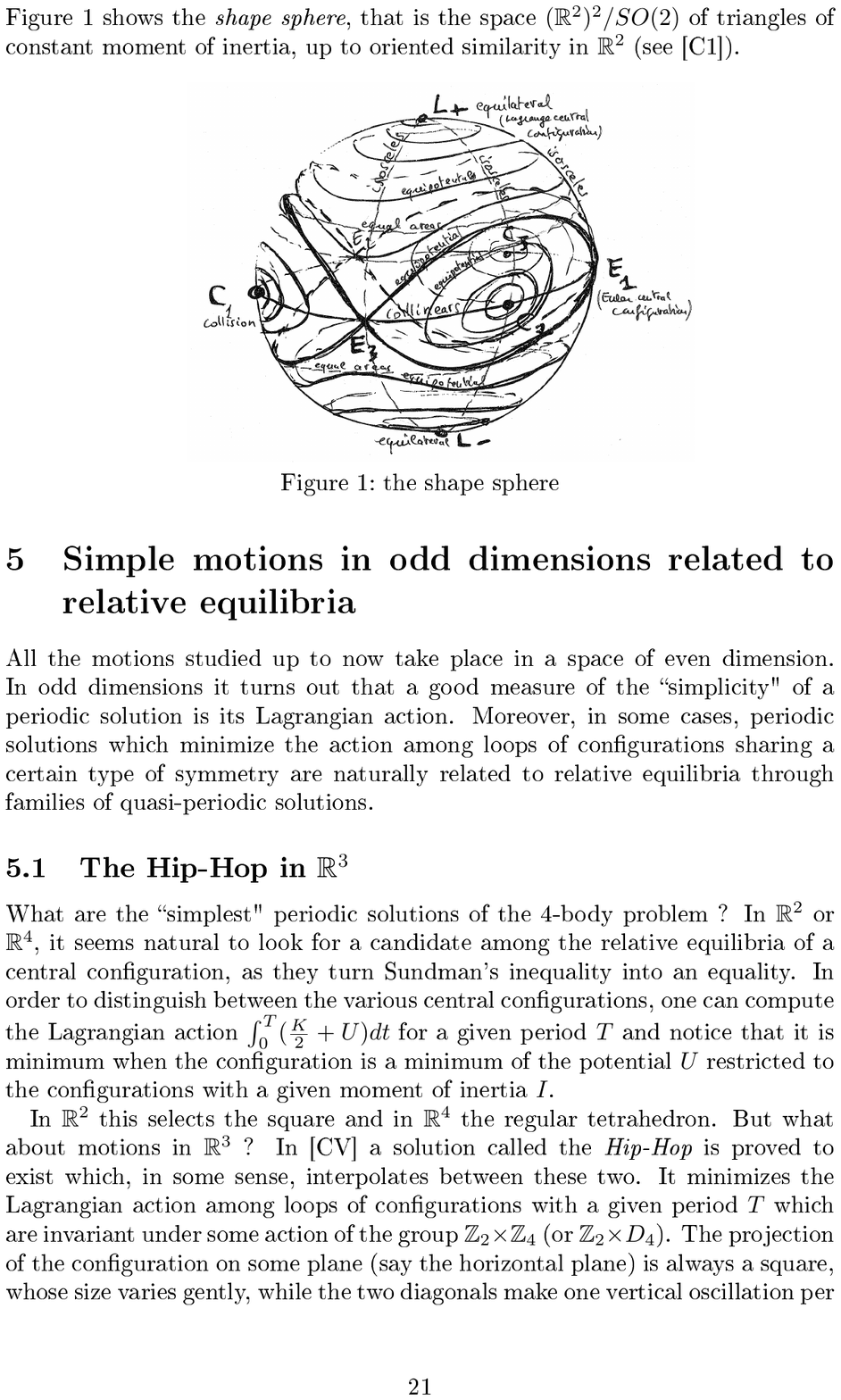}
\centerline{Figure~1:~the shape sphere}
\end{center}
\section{Simple motions in odd dimensions related to relative equilibria} 
All the motions studied up to now take place in a space of even dimension. In odd dimensions it turns out that a good measure of the ``simplicity" of a periodic solution is its Lagrangian action. Moreover, in some cases, periodic solutions which minimize the action among loops of configurations sharing a certain type of symmetry are naturally related to relative equilibria through families of quasi-periodic solutions.
\subsection{The Hip-Hop in $\R^3$}
What are the ``simplest" periodic solutions of the 4-body problem ?
In $\R^2$ or $\R^4$, it seems natural to look for a candidate among the relative equilibria of a central configuration, as they turn Sundman's inequality into an equality. In order to distinguish between the various central configurations, one can compute the Lagrangian action $\int_0^T(\frac{K}{2}+U)dt$ for a given period $T$ and notice that it is minimum when the configuration is a minimum of the potential $U$ restricted to the configurations with a given moment of inertia $I$.

In $\R^2$ this selects the square and in $\R^4$ the regular tetrahedron. But what about motions in $\R^3$ ? In \cite{CV} a solution called the {\it Hip-Hop} is proved to exist which, in some sense, interpolates between these two. It minimizes the Lagrangian action among loops of configurations
with a given period $T$ which are invariant under some action of the group $\Z_2\times \Z_4$ (or $\Z_2\times D_4$). The projection of the configuration on some plane (say the horizontal plane) is always a square, whose size varies gently, while the two diagonals make one vertical oscillation per period in opposite directions. In particular, twice per period the configuration is a square and twice per period it is a regular tetrahedron. 

One can reasonably conjecture that imposing the sole {\it italian symmetry}, that is  the identity $x(t-\frac{T}{2})=-x(t)$ for all $t$, would give rise to the same solutions. In any case, it can be proved that such minimizers have no collisions (i.e. that they are true solutions) and that, for the spatial problem, they are never planar ; moreover, this is true for an arbitrary number $N\ge 4$ of bodies and any system $m_1,\cdots,m_N$ of positive masses \cite{C4}.
\begin{center}
\includegraphics{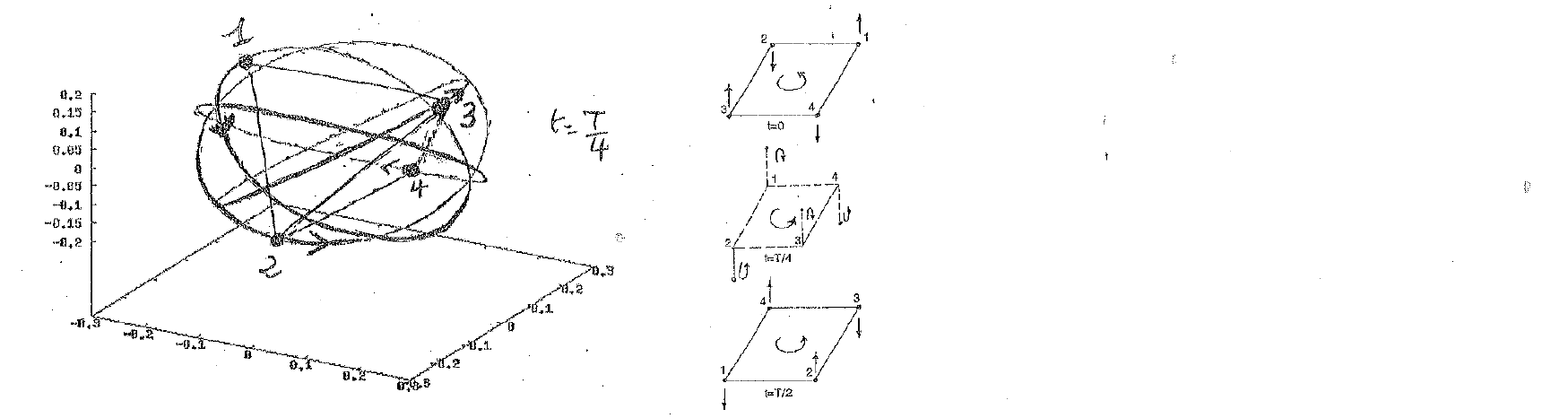}

Figure 2: the~original Hip-Hop  ($p=2$)
\end{center}

\subsection{From $\R^2$ to $\R^3$: the unchained polygons}
The equal mass Hip-Hop is naturally related to the relative equilibrium of the square through a family of quasi-periodic (in fact periodic modulo rotation, i.e. periodic in rotating frames) solutions in $\R^3$  (see \cite{TV,CF}): in a well chosen family of rotating frames, the members of the family look like Hip-Hop solutions parametrized by the amplitude of their vertical oscillations.

More generally, the (planar) relative equilbria of $N$ equal masses forming a regular $N$-gon are related in the same way to planar periodic solutions which can be either generalized Hip-Hops or choreographies \cite{CF}.
\begin{center}
\includegraphics{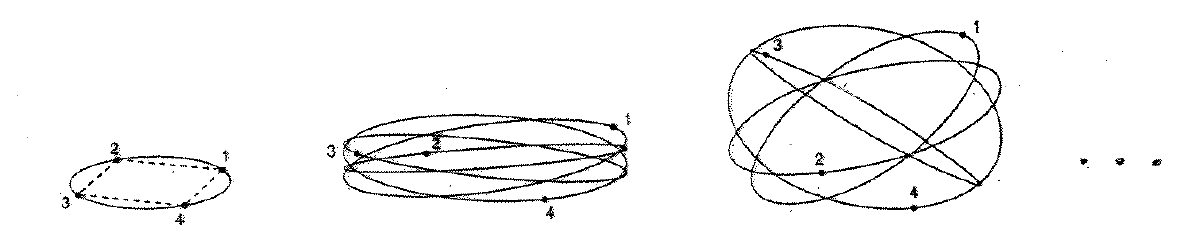}
Figure 3: the Hip-Hop family (all frames are rotating around the vertical axis except the one at right) 
\end{center}

\subsection{Hip-Hops in higher dimensions}
In higher dimensions, the situation seems to be exactly similar : given $2p$ bodies, the action minimizers with italian symmetry constraint  are the relative equilibria of the regular $2p$-simplex in $\R^{2p}$ and one can conjecture that:

{\it (i)} In $\R^{2p-2}$ they are relative equilibria of a central configuration made of a copy of the regular $p$-simplex in each one of two orthogonal subspaces of dimension $p-1$;

{\it (ii)} in $\R^{2p-1}$ they are of Hip-Hop type, with the two diagonal segments of the $p=2$ case replaced by two regular $p$-simplices which would have opposite periodic  motions along the $\R$ factor of $\R^{2p-1}=\R^{2p-2}\times\R$.

\subsection{From one Hip-Hop to the other through the fourth dimension}
Other equal mass Hip-Hop solutions exist which have different symmetries (see \cite{CV,C4,CF}). Figure 4 shows how two of them are simply related through families of quasi periodic solutions in $\R^4$. The key fact is that, while the regular tetrahedron is a central configuration, $\Z/4\Z$-symmetric tetrahedra as well as $\Z/3\Z$-symmetric tetrahedra are balanced configurations which admit quasi-periodic relative equilibrium motions in $\R^4$ (see \cite{C5}, this means some degeneracy occurs, as a general balanced configuration of 4 bodies would allow only quasi-periodic relative equilibrium solutions in $\R^6$).

1) the Hip-Hop family of quasi-periodic solutions in $\R^3$ described in figure 3 leads from the Hip-Hop to the planar periodic relative equilibrium of the square;

2) a family of quasi-periodic relative equiliibria in $\R^4$ of $\Z/4\Z$-symmetric less and less flat tetrahedra  links the relative equilibrium solution of the square to a periodic relative equilibrium solution of the regular tetrahedron;

3) a family of periodic relative equilibria in $\R^4$ of the regular tetrahedron, corresponding to a family of complex structures in $\R^4$,
leads to a new periodic relative equilibrium solution of the regular tetrahedron;

4) a family of quasi-periodic relative equiliibria in $\R^4$ of $\Z/3\Z$-symmetric more and more flat tetrahedra  links the new relative equilibrium solution of the regular tetrahedron to the periodic planar relative equilibrium solution of the equilateral triangle with the fourth mass at its center;

5) a family of quasi-periodic solutions in $\R^3$ of Hip-Hop type with $\Z/3\Z$-symmetry  ends with a periodic Hip-Hop solution with 
$\Z/3\Z$-symmetry.
\smallskip

\noindent The eigenvalues that the angular momentum of a relative equilibrium can take and the ones at which a periodic relative equilibrium with central configuration may undergo a bifurcation to a family of quasi-periodic relative equilibria with balanced configurations are studied in \cite{C2}. The first question amounts to a purely algebraic problem related to Horn's problem, which is now completely understood \cite{CJ}.

\begin{center}
\includegraphics{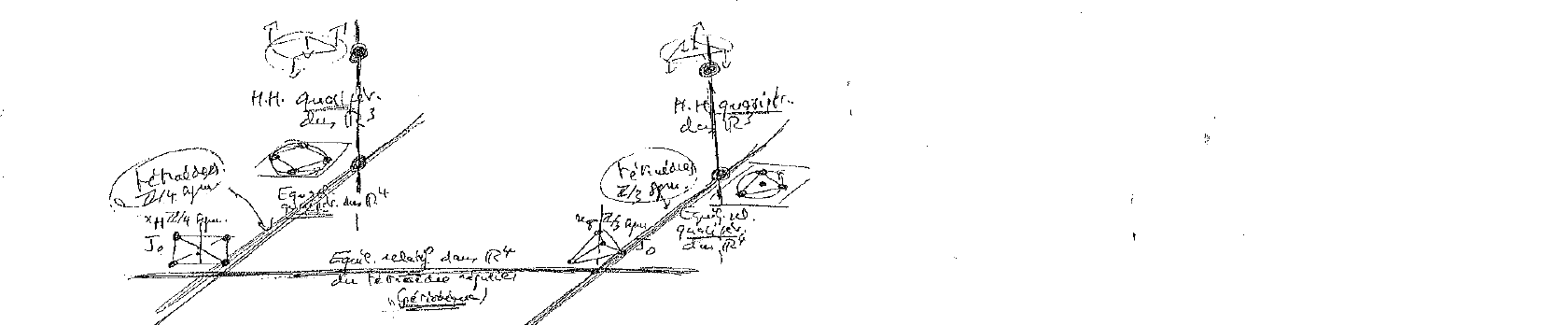}

Figure 4: from a $\Z/4\Z$-symmetric Hip-Hop to a $\Z/3\Z$-symmetric one 
\end{center}

\smallskip

\section*{Acknowledgments}

Warm thanks to Do Duc Thai, Eva Miranda, Nguyen Tien Zung  and Viktor Guinzburg, the organizers of the conference ``Geometrical methods in Dynamics and Topology", who invited me to give this course in Hanoi National University of Education in April 2011 and insisted on my writing these notes;  warm thanks also to Kuo-Chang Chen and Sun Shanzhong who invited me to give a similar course respectively at N.C.T.S (Tsing Hua University) in Hsinchu (Taiwan) in May 2010 and at Shoudu Shifan Daxue in Beijing in June 2011. 

When coming for the first time in Hanoi in 1974 with Bernard Malgrange, Fr\'ed\'eric Pham and L\^e D$\tilde u$ng Tr\'ang, I had the privilege to meet almost every day  T$\underset{^\cdot}{a}$
 Quang Bu\'u, then minister of education and L\^e Van Thi\^em, then director of the Institute of Mathematics.  They deeply impressed me and both are well alive in my memory.


\begin{thebibliography}{B}

\bibitem[A1]{A1} A. Albouy, Integral manifolds of the N-body problem, {\it Invent.
math.} 114 (1993), 463-488.

\bibitem[A2]{A2} A. Albouy, Lectures on the Two-Body Problem, in {\it Classical and Celestial Mechanics (the Recife lectures)}, H. Cabral and C. Diacu ed., Princeton University Press, 2002.

\bibitem[A]{A}A. Albouy, Mutual distances in Celestial Mechanics, {\it Lectures at Nankai Institute}, Tianjin, China, June 2004.

\bibitem[AC]{AC} A. Albouy, A. Chenciner, Le Probl\`eme des $N$ corps et les distances
mu\-tu\-elles, {\it Inventiones mathematicae} 131 (1998), 151-184.

\bibitem[Ba]{Ba} T. Banachiewitz, Sur un cas particulier du probl\`eme des trois corps,
{\it C. R. Acad. Sci. Paris}, 142 (1906), 510-512.

\bibitem[Be]{Be} E. Betti, Sopra il moto di un sistema di un numero qualunque di punti che si attragono o si
respingono tra di loro, {\it Annali di Matematica} s.2 t. 8 (1877), 301-311.

\bibitem[C0]{C0} A. Chenciner, Introduction to the N-body problem, {\it Ravello summer school} (1997), 
{\small \texttt http://www.imcce.fr/Equipes/ASD/person/chenciner/chenciner.html}

\bibitem[C1]{C1} A. Chenciner, The ``form" of a triangle, {\it Rendiconti di Matematica}, S\'erie VII, vol. 27 (2007), 1-16. 

\bibitem[C2]{C2} A. Chenciner, The angular momentum of a relative equilibrium,  

{\small \texttt http://arxiv.org/abs/1102.0025} (2010).

\bibitem[C3]{C3} A. Chenciner, Three Body Problem, {\it Scholarpedia} 2[10]:2111 (2007). 

\bibitem[C4]{C4}  A. Chenciner, Simple non-planar periodic solutions of the $n$-body problem,
{\it proceedings of NDDS meeting in Kyoto} (August 2002).  

\bibitem[C5]{C5} A. Chenciner, Symmetric 4-body balanced configurations and their relative equilibrium motions, {\it in preparation}.


\bibitem[CF]{CF} A. Chenciner \& J. F\'ejoz, Unchained polygons and the N-body problem, 
{\it Regular and chaotic dynamics} Vol. 14, $N^01$ (2009), 64-115. 

\bibitem[CJ]{CJ} A. Chenciner \& H. Jimenez Perez, Angular momentum and Horn's problem,  
{\small \texttt http://arxiv.org/abs/1110.5030} (2011).

\bibitem[CV] {CV} A. Chenciner \& A.Venturelli, Minima de l'int\'egrale d'action du Probl\`eme
newtonien de $4$ corps de masses \'egales dans $\R^3$ : orbites ``hip-hop", {\it Celestial Mechanics},  77 (2000), 139-152. 

\bibitem[J]{J} C.G.J. Jacobi, Vorlesungen \"uber Dynamik (1843), in {\it Gesammelte Wer\-ke} Chelsea (1969).


\bibitem[L1]{L1}J .L. Lagrange, Essai sur le probl\`eme des trois corps (1772),  in {\it Oeuvres} volume 6, 229-324.

\bibitem[L2]{L2} J.L. Lagrange, Remarques g\'en\'erales sur le mouvement de plusieurs corps (1777),
in {\it Oeuvres} v.4, 401-418. 

\bibitem [P]{P} H. Poincar\'e {\it Le\c cons sur les hypoth\`eses cosmogoniques}, Gauthier-Villars, 1911.

\bibitem[TV]{TV} S. Terracini \& A. Venturelli, Symmetric
    trajectories for the 2N-body problem with equal masses, {\it ARMA} 184 (2007), 465-493.  
   
\bibitem[W1]{W1} A. Wintner, {\it The analytical foundations of Celestial Mechanics}, Princeton University Press, 1941.

\bibitem[W2]{W2} A. Wintner, Galilei group and law of gravitation, {\it Amer. Journ. of Math.}
60 (1938) , 473-476.  



\end{thebibliography}
\end{document}